\title[$V$ may not be a class-forcing extension of \HOD]{The set-theoretic universe $V$ is not necessarily a class-forcing extension of $\HOD$}
\author[Hamkins]{Joel David Hamkins}
 \address[Joel David Hamkins]
         {Mathematics, Philosophy, Computer Science, The Graduate Center of The City University of New York,
         365 Fifth Avenue, New York, NY 10016 \&
         Mathematics, College of Staten Island of CUNY, Staten Island, NY 10314 USA}
 \email{jhamkins@gc.cuny.edu}
 \urladdr{http://jdh.hamkins.org}
\author[Reitz]{Jonas Reitz}
 \address[Jonas Reitz]{New York City College of Technology of The City University of New York,
                    Mathematics, 300 Jay Street, Brooklyn, NY 11201 USA}
 \email{jonasreitz@gmail.com}
\thanks{The research of the second author has been supported in part by PSC-CUNY grant 69095-00-47. This article grew out of an exchange of the authors on MathOverflow~\cite{Reitz2013.MO118477:Is-every-class-that-does-not-add-sets-necessarily-added-by-forcing, Hamkins2013.MO118525:Is-every-class-that-does-not-add-sets-necessarily-added-by-forcing}, which led to the main technique. Questions and commentary on this article can be made at {\smaller\url{http://jdh.hamkins.org/the-universe-need-not-be-a-class-forcing-extension-of-hod}}.}
\renewcommand{\UrlFont}{\smaller} 
\addcolon\nolinkurl{#1}}\iffieldundef{eprintclass}{}{\UrlFont{\mkbibbrackets{\thefield{eprintclass}}}}}
\addcolon\nolinkurl{#1}\iffieldundef{eprintclass}{}{\UrlFont{\mkbibbrackets{\thefield{eprintclass}}}}}}
\newbox\gnBoxA
\newdimen\gnCornerHgt
\newdimen\gnArgHgt
\def\gcode #1{%
\setbox\gnBoxA=\hbox{$#1$}%
\gnArgHgt=\ht\gnBoxA%
\ifnum     \gnArgHgt<\gnCornerHgt \gnArgHgt=0pt%
\else \advance \gnArgHgt by -\gnCornerHgt%
\fi \raise\gnArgHgt\hbox{\tiny$\ulcorner$} \box\gnBoxA %
\raise\gnArgHgt\hbox{\tiny$\urcorner$}}
\newtheorem{theorem}{Theorem}
\newtheorem{question}[theorem]{Question}
\newcommand{\Vopenka}{Vop\v{e}nka}
\newcommand{\Godel}{G\"odel}
\newcommand{\Lowenheim}{L\"owenheim}
\newcommand{\B}{{\mathbb B}}
\renewcommand{\P}{{\mathbb P}}
\newcommand{\Q}{{\mathbb Q}}
\newcommand{\of}{\subseteq}
\newcommand{\set}[1]{\{\,{#1}\,\}}
\newcommand{\satisfies}{\models}
\newcommand{\forces}{\Vdash}
\newcommand{\Mantle}{{\mathord{\rm M}}}
\newcommand{\concat}{\mathbin{{}^\smallfrown}}
\newcommand{\intersect}{\cap}
\newcommand{\Intersect}{\bigcap}
\def\<#1>{\langle#1\rangle}
\newcommand{\val}{\mathop{\rm val}\nolimits}
\newcommand{\Ord}{\mathop{{\rm Ord}}}
\newcommand{\ZFC}{{\rm ZFC}}
\newcommand{\ZF}{{\rm ZF}}
\newcommand{\KM}{{\rm KM}}
\newcommand{\GBC}{{\rm GBC}}
\newcommand{\GCH}{{\rm GCH}}
\newcommand{\HOD}{{\rm HOD}}
\newcommand{\OD}{{\rm OD}}
\newcommand{\ETR}{{\rm ETR}}
\newcommand{\Tr}{\mathop{\rm Tr}}
\newcommand{\Add}{\mathop{\rm Add}}
\newcommand{\elesub}{\prec}
\newtheorem*{maintheorem*}{Main Theorem} 
\newtheorem*{mainquestion*}{Main Question} 
\begin{document}

\begin{abstract}
In light of the celebrated theorem of \Vopenka~\cite{Vopenka&Hajek1972:TheTheoryOfSemisets}, proving in \ZFC\ that every set is generic over \HOD, it is natural to inquire whether the set-theoretic universe $V$ must be a class-forcing extension of \HOD\ by some possibly proper-class forcing notion in \HOD. We show, negatively, that if \ZFC\ is consistent, then there is a model of \ZFC\ that is not a class-forcing extension of its \HOD\ for any class forcing notion definable in \HOD\ and with definable forcing relations there (allowing parameters). Meanwhile, S.~Friedman~\cite{Friedman2012:TheStableCore} showed, positively, that if one augments $\HOD$ with a certain \ZFC-amenable class $A$, definable in $V$, then the set-theoretic universe $V$ is a class-forcing extension of the expanded structure $\<\HOD,\in,A>$. Our result shows that this augmentation process can be necessary. The same example shows that $V$ is not necessarily a class-forcing extension of the mantle, and the method provides counterexamples to the intermediate model property, namely, a class-forcing extension $V\of W\of V[G]$ with an intermediate transitive inner model $W$ that is neither a class-forcing extension of $V$ nor a ground model of $V[G]$ by any definable class forcing notion with definable forcing relations.
\end{abstract}

\maketitle

\section{Introduction}

In 1972, \Vopenka\ proved the following celebrated result.

\begin{theorem}[\Vopenka~\cite{Vopenka&Hajek1972:TheTheoryOfSemisets}]\label{Theorem.Vopenka-theorem-every-set-generic-over-HOD}
 If $V=L[A]$ where $A$ is a set of ordinals, then $V$ is a forcing extension of the inner model $\HOD$.
\end{theorem}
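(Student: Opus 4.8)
The plan is to use the original \Vopenka\ forcing. Since $A$ is a set of ordinals, fix an ordinal $\kappa$ with $A\subseteq\kappa$, and work inside $\HOD$ with the partial order $\mathbb{P}$ whose conditions are the nonempty ordinal-definable subsets of $\mathcal{P}(\kappa)$, ordered by $X\le Y\iff X\subseteq Y$. The first point to settle is that $\mathbb{P}$ genuinely is a \emph{set} and belongs to $\HOD$. The $\mathrm{OD}$ subsets of the fixed set $\mathcal{P}(\kappa)$ form a subcollection of $\mathcal{P}(\mathcal{P}(\kappa))$, hence a set; and, crucially, the reflection-theorem device that expresses ``$x$ is ordinal definable'' by a single formula --- restricting witnessing definitions to rank-initial segments $V_\theta$ --- shows that this set of conditions, together with its ordering, is itself ordinal definable. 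Since every condition is an $\mathrm{OD}$ set of $\mathrm{OD}$ sets of ordinals, the whole structure is hereditarily ordinal definable, so $\mathbb{P}\in\HOD$.

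Next I would read a candidate $\HOD$-generic filter directly off of $A$, setting $G=\{\,X\in\mathbb{P}:A\in X\,\}$. This is a filter on $\mathbb{P}$: it is upward closed, and if $A\in X$ and $A\in Y$ then $A\in X\cap Y$, which is again a nonempty $\mathrm{OD}$ subset of $\mathcal{P}(\kappa)$ and hence a common refinement of $X$ and $Y$ lying in $G$.

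The main step is genericity of $G$ over $\HOD$, and here the key is a short complement computation. Let $W\in\HOD$ be a maximal antichain in $\mathbb{P}$; since $W\in\HOD$ it is in particular ordinal definable, so $\bigcup W$ is an $\mathrm{OD}$ subset of $\mathcal{P}(\kappa)$, and so is its complement $C=\mathcal{P}(\kappa)\setminus\bigcup W$. If $C$ were nonempty it would be a condition in $\mathbb{P}$ with $C\cap X=\emptyset$ for every $X\in W$, that is, incompatible with every element of the maximal antichain $W$ --- impossible. Hence $\bigcup W=\mathcal{P}(\kappa)$, so in particular $A\in X$ for some $X\in W$ and $G\cap W\ne\emptyset$. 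As $G$ meets every maximal antichain of $\mathbb{P}$ lying in $\HOD$, it is $\HOD$-generic.

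Finally, $V=\HOD[G]$: since $\HOD\subseteq V$ and $G\in V$ we have $\HOD[G]\subseteq V$, and conversely $A$ is reconstructible inside $\HOD[G]$, because for each $\alpha<\kappa$ the set $X_\alpha=\{\,x\in\mathcal{P}(\kappa):\alpha\in x\,\}$ is an ordinal-definable condition and $\alpha\in A\iff X_\alpha\in G$; thus $A\in\HOD[G]$, and as $\HOD[G]$ is a model of $\ZFC$ (it is a set-forcing extension of $\HOD$) containing all the ordinals together with $A$, it contains $L[A]=V$, giving equality. The step I expect to require the most care is the very first one --- checking that $\mathbb{P}$ and its ordering are genuinely definable in $\HOD$ --- since expressibility of the predicate ``ordinal definable'' is exactly what drives the \Vopenka\ argument and is precisely the feature a general (non-$\mathrm{OD}$) class need not possess; once the forcing is in place, genericity is the one-line complement observation above.
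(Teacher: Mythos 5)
Your overall strategy is the standard \Vopenka\ argument (the paper itself does not reprove the theorem; it cites the classical proof as in Jech), and your genericity and reconstruction steps are correct: the complement computation for maximal antichains and the recovery of $A$ from the conditions $X_\alpha=\{x\in\mathcal{P}(\kappa):\alpha\in x\}$ are exactly right. But there is a genuine gap at the step you yourself flagged as delicate, and it is not the issue you addressed. You claim $\mathbb{P}\in\HOD$ because ``every condition is an \OD\ set of \OD\ sets of ordinals.'' That is false: a condition is an \OD\ set whose \emph{elements} are arbitrary subsets of $\kappa$, not necessarily \OD\ ones. For instance $\mathcal{P}(\kappa)$ itself is a condition, and it lies in $\HOD$ only if every subset of $\kappa$ is ordinal definable, which fails in exactly the interesting cases (e.g.\ when $A$ is nontrivially generic over $\HOD$). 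So $\mathbb{P}$ as you defined it is an \OD\ set but not a subset of $\HOD$, hence not an element of $\HOD$, and one cannot force with it over $\HOD$ as it stands.

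The missing idea is the transfer to an isomorphic copy on ordinals: since there is a definable well-ordering, indeed a definable enumeration, of the \OD\ sets, fix an \OD\ enumeration $\langle X_\beta:\beta<\lambda\rangle$ of the nonempty \OD\ subsets of $\mathcal{P}(\kappa)$ and replace $\mathbb{P}$ by the relation $\beta\leq\gamma\iff X_\beta\subseteq X_\gamma$ on $\lambda$. This is an \OD\ set of pairs of ordinals, hence an element of $\HOD$. The rest of your argument then goes through verbatim after translating $G$, the maximal antichains, and the sets $X_\alpha$ through the enumeration; the relevant index maps are \OD\ functions on ordinals and so lie in $\HOD$. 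With that repair your proof is the standard one.
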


The result is now standard, appearing in Jech~\cite[p. 249]{Jech:SetTheory3rdEdition} and elsewhere, and the usual proof establishes a stronger result, stated in \ZFC\ simply as the assertion: every set is generic over \HOD. In other words, for every set $a$ there is a forcing notion $\B\in\HOD$ and a $\HOD$-generic filter $G\of\B$ for which $a\in\HOD[G]\of V$. The full set-theoretic universe $V$ is therefore the union of all these various set-forcing generic extensions $\HOD[G]$.

It is natural to wonder whether these various forcing extensions $\HOD[G]$ can be unified or amalgamated to realize $V$ as a single class-forcing extension of \HOD\ by a possibly proper class forcing notion in \HOD. We expect that it must be a very high proportion of set theorists and set-theory graduate students, who upon first learning of \Vopenka's theorem, immediately ask this question.

\begin{mainquestion*}\label{Question.Main-question}
 Must the set-theoretic universe $V$ be a class-forcing extension of \HOD?
\end{mainquestion*}

We intend the question to be asking more specifically whether the universe $V$ arises as a bona-fide class-forcing extension of \HOD, in the sense that there is a class forcing notion $\P$, possibly a proper class, which is definable in $\HOD$ and which has definable forcing relation $p\forces\varphi(\tau)$ there for any desired first-order formula $\varphi$, such that $V$ arises as a forcing extension $V=\HOD[G]$ for some $\HOD$-generic filter $G\of\P$, not necessarily definable.

In this article, we shall answer the question negatively, by providing a model of \ZFC\ that cannot be realized as such a class-forcing extension of its \HOD.

\begin{maintheorem*}\label{Theorem.Main-theorem}
 If \ZFC\ is consistent, then there is a model of $\ZFC$ which is not a forcing extension of its $\HOD$ by any class forcing notion definable in that \HOD\ and having a definable forcing relation there.
\end{maintheorem*}

Throughout this article, when we say that a class is definable, we mean that it is definable in the first-order language of set theory allowing set parameters.

The main theorem should be placed in contrast to the following result of Sy Friedman, which we discuss in section~\ref{Section.Friedmans-theorem}.

\begin{theorem}[Friedman~\cite{Friedman2012:TheStableCore}]\label{Theorem.Friedmans-theorem}
There is a definable class $A$, which is strongly amenable to \HOD, such that the set-theoretic universe $V$ is a generic extension of $\<\HOD,\in,A>$.
\end{theorem}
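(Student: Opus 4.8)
The plan is to realize $V$ over $\HOD$ not by amalgamating the \Vopenka\ set-forcings directly --- which the Main Theorem shows can be impossible in $\HOD$ alone --- but by first enriching $\HOD$ with a definable predicate $A$ that supplies exactly the uniform scaffolding such an amalgamation requires. The predicate I would use is Friedman's \emph{stability predicate}, the class underlying the stable core: for (codes of) ordinals $\alpha<\beta$ and each $n$, $A$ records whether $L_\alpha\prec_{\Sigma_n}L_\beta$, together with the induced canonical sequences of stability ordinals. This class is definable in $V$ without parameters, and since $L\of\HOD$ it lives over $\HOD$ as well. The two things to establish are then (i) that $A$ is strongly amenable, so that $\langle\HOD,\in,A\rangle$ is a legitimate \ZFC\ ground model, and (ii) that $V=\langle\HOD,\in,A\rangle[G]$ for a generic $G$ of a class forcing $\P$ definable over the expanded structure with a definable forcing relation.

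For step (i), amenability --- that $A\cap x\in\HOD$ for every set $x$ --- is immediate, since the stability relation restricted to any set of ordinals is itself ordinal-definable and hence already a member of $\HOD$. Strong amenability, namely that $\langle\HOD,\in,A\rangle$ satisfies the separation and collection schemes for formulas that mention $A$, is the substantive point; I would verify it by showing that every relevant instance reflects to a set-sized initial segment of $A$, exploiting precisely the condensation built into the stability ordinals, which is what lets a local fragment of $A$ determine the global pattern below a reflection point.

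For step (ii), I would define $\P$ over $\langle\HOD,\in,A\rangle$ by adapting the Jensen-style almost-disjoint coding and reshaping machinery of Friedman's coding-the-universe program: proceeding along the ordinals, the generic object codes, level by level, the membership diagram of $V$ that is not already decided in $\HOD$, with the stability predicate driving the bookkeeping so that each coding step, and hence the forcing relation $p\forces\varphi$, remains definable over the structure. One then checks that the object read off from the true $V$ is generic for $\P$ over $\langle\HOD,\in,A\rangle$ --- using the reflection afforded by $A$ to meet every dense class --- and that $\langle\HOD,\in,A\rangle[G]=V$: the filter $G$ is built from $V$, and conversely every set of $V$ is captured at some coding level and so reappears in the extension.

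The hard part will be coordinating three competing demands at once. The coding forcing must be rich enough to capture arbitrary sets of $V$, yet tame enough that it adds no unintended sets, collapses nothing in an uncontrolled way, and --- crucially for the statement as phrased --- carries a \emph{definable} forcing relation over $\langle\HOD,\in,A\rangle$. These constraints pull against one another, and it is exactly here that the stability predicate earns its place: $A$ is engineered so that at each stage the coding apparatus has the condensation and reflection it needs to stay definable and to keep $A$ strongly amenable. I expect the delicate verification that the forcing relation is definable over the expanded structure, simultaneously with the strong amenability of $A$, to be the crux of the argument.
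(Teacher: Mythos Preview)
First, note that the paper does not actually prove this theorem: it is stated and attributed to Friedman~\cite{Friedman2012:TheStableCore}, with only a brief discussion of its significance in section~\ref{Section.Friedmans-theorem}. So there is no proof here to compare your proposal against; your sketch is, in spirit, an outline of what Friedman's own argument would have to accomplish.

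That said, there is a genuine error in your choice of the predicate $A$. You take $A$ to record whether $L_\alpha\prec_{\Sigma_n}L_\beta$, and you justify its living over $\HOD$ by noting ``since $L\subseteq\HOD$.'' But precisely because this predicate speaks only about the constructible hierarchy, it is already first-order definable in $L$ and hence definable in $\HOD$ itself. Adding such an $A$ to $\HOD$ enriches nothing: the structure $\langle\HOD,\in,A\rangle$ has exactly the same definable classes as $\HOD$ alone, and the Main Theorem of this very paper shows that $V$ need not be a class-forcing extension of $\HOD$ by any definable forcing with definable forcing relations. Friedman's actual stability predicate records $\Sigma_n$-elementarity among the $H_\alpha$ of $V$ (for suitable strong limit cardinals $\alpha$), not among the $L_\alpha$; it is this that carries genuine information about the structure of $V$ invisible to $\HOD$, and it is what makes step~(ii) possible. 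With your $L$-version, no forcing $\P$ definable over the expanded structure can produce a generic recovering all of $V$, since nothing in $\langle\HOD,\in,A\rangle$ distinguishes $V$ from any other outer model with the same $\HOD$.

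A secondary point: the forcing Friedman uses in the stable-core paper is a class version of the \Vopenka\ ordinal-algebra forcing, not Jensen-style almost-disjoint coding and reshaping. The latter is the machinery of his earlier coding-the-universe program and serves a rather different purpose; invoking it here would send you down a harder and unnecessary path.
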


This is a postive answer to the main question, if one is willing to augment \HOD\ with a class $A$ that may not be definable in \HOD. Our main theorem shows that in general, this kind of augmentation process is necessary.

In section~\ref{Section.The-mantle}, we extend our analysis to provide a model of \ZFC\ which cannot arise as a class-forcing extension of its mantle, by any class forcing notion definable in the mantle. And in section~\ref{Section.Intermediate-models}, we consider the intermediate model property for class forcing, the question of whether every transitive inner model $W$ that is intermediate $V\of W\of V[G]$ between a ground model $V$ and a class-forcing extension $V[G]$ must itself arise as a class-forcing extension of $V$.

\section{Some set-theoretic background}\label{Section.Background}

An object is definable in a model if it satisfies a property in that model that only it satisfies. In a model of set theory, a set is \emph{ordinal-definable} if it satisfies a property, expressible in the first-order language of set theory with some ordinal parameters, such that only it has that property with those parameters. Although this is a model-theoretic definition, undertaken from an outside-the-universe perspective making reference to a truth predicate (in order to speak of which objects and how many satisfy the given formula), nevertheless it is a remarkable fact that in a model of \ZF\ set theory, we can define the class of ordinal-definable sets internally. Namely, in set theory we define \OD\ as the class of sets $a$ such that for some ordinal $\theta$, the set $a$ is ordinal-definable in the set structure $\<V_\theta,\in>$. In \ZF, we have truth predicates for every set structure, including the structures $\<V_\theta,\in>$, and so this definition can be carried out internally in \ZF, defining a certain class. The remarkable observation is that if $a$ happens actually to be ordinal-definable in the full universe $\<V,\in>$, then it is the unique object satisfying $\varphi(a,\vec\alpha)$ for some formula $\varphi$ and ordinals $\vec\alpha$, and by the reflection theorem there is some $V_\theta$ for which this property is true in $V_\theta$, and so $a$ will be defined by $\varphi(\cdot,\vec\alpha)$ in $V_\theta$, placing $a\in\OD$. Conversely, if $a\in \OD$, then $a$ is defined by some (internal) formula $\varphi(\cdot,\vec\alpha)$ in $V_\theta$, and this is enough to conclude that $a$ is ordinal definable in $V$, because $a$ is the unique object that satisfies $\varphi(a,\vec\alpha)$ in $V_\theta$.\footnote{This argument is more subtle than it may at first appear, since the claim is that this works in every model of \ZFC, including the $\omega$-nonstandard models. The subtle issues are first, that perhaps the finite sequence of ordinals $\vec\alpha$ isn't really finite, but only nonstandard finite; and second, perhaps the formula $\varphi$ itself isn't a standard-length formula, but is a nonstandard formula, which we cannot use to define $a$ in $V$ externally. The first issue can be resolved by using the internal pairing function of the model to replace $\vec\alpha$ with a single ordinal $\alpha=\gcode{\vec\alpha}$ that codes it, thereby reducing to the case of one ordinal parameter. (In fact, one can omit the need for any ordinal parameter at all other than $\theta$, by coding information into $\theta$.) The second issue is resolved simply by taking the \Godel\ code $\gcode{\varphi}$ as an additional parameter. Since this is a natural number, it is an ordinal and therefore can be used as an ordinal parameter. So $a$ is the unique object in $V$ for which $\varphi(a,\vec\alpha)$ holds in $V_\theta$, which is expressible in $V$ as a property of $a$, $\theta$, $\gcode{\vec\alpha}$ and $\gcode{\varphi}$. So it is ordinal-definable.}

Having defined \OD, we define the class \HOD\ of \emph{hereditarily ordinal-definable} sets, which are the sets $a$ whose transitive closure is contained in \OD. So $a$ is ordinal definable, as well as all of its elements and elements-of-elements and so on. It follows that \HOD\ is a transitive class containing all the ordinals, and it is an elementary observation, originally due to \Godel, to prove that \HOD\ is a model of \ZFC. From the fact that the objects of \HOD\ are there in virtue of a definition with ordinal parameters, it follows that in $V$ we may define a global well-order of \HOD.

Several of the arguments will rely on the concept of a satisfaction class or truth predicate. Specifically, a class $\Tr$ is a \emph{satisfaction class} or \emph{truth predicate} for first-order set-theoretic assertions, if it consists of pairs $\<\varphi,\vec a>$ where $\varphi$ is a first-order formula in the language of set theory and $\vec a$ is a valuation of the free-variables of $\varphi$ to actual objects in the set-theoretic universe $V$, such that the Tarskian recursion is satisfied:
\begin{enumerate}[(a)]
  \item $\Tr$ judges the truth of atomic statements correctly:
            \begin{align*}
              \Tr(x=y,\<a,b>) &\quad \text{ if and only if }\quad a=b \\
              \Tr(x\in y,\<a,b>) &\quad \text{ if and only if } \quad a\in b
            \end{align*}
  \item $\Tr$ performs Boolean logic correctly:
            \begin{align*}
              \Tr(\varphi\wedge\psi,\vec a)&\quad \text{ if and only if }\quad \Tr(\varphi,\vec a)\text{ and }\Tr(\psi,\vec a)\\
              \Tr(\neg\varphi,\vec a)\ &\quad \text{ if and only if }\quad \neg\Tr(\varphi,\vec a)
            \end{align*}
  \item $\Tr$ performs quantifier logic correctly:
            $$\Tr(\forall x\, \varphi,\vec a)\quad\text{ if and only if }\quad\forall b\, \Tr(\varphi,b\concat\vec a)$$
\end{enumerate}
In any model $\<M,\in,\Tr>$ equipped with such a truth predicate, an easy induction on formulas shows that $\Tr(\varphi,\vec a)$ if and only if $M\satisfies\varphi[\vec a]$, for any (standard) formula $\varphi$ in the language of set theory. (In $\omega$-nonstandard models, the truth predicate will also involve nonstandard formulas.) Tarski famously proved that no model of set theory can have a definable truth predicate, even with parameters. Please see~\cite{GitmanHamkinsHolySchlichtWilliams:The-exact-strength-of-the-class-forcing-theorem} for further discussion of truth predicates of various kinds in the \Godel-Bernays \GBC\ set theory context.

If $\kappa$ is an inaccessible cardinal, then $V_\kappa$ is a model of \ZFC\ and indeed a model of Kelley-Morse set theory \KM, when equipped with all its subsets as classes, and among these is a truth predicate for the structure $\<V_\kappa,\in>$. Indeed, \KM\ itself implies the existence of a truth predicate for first-order truth, as explained in~\cite{Hamkins.blog2014:KMImpliesCon(ZFC)AndMuchMore}, and we don't actually need the full power of \KM\ for this, since the principle of elementary transfinite recursion \ETR, and indeed, already a small fragment $\ETR_\omega$ suffices, a principle weaker than the principle of clopen determinacy for class games~\cite{GitmanHamkins2016:OpenDeterminacyForClassGames}. But it is not provable in \GBC. So the existence of a truth predicate is a mild extension of \GBC\ in the direction of \ETR\ on the way to \KM.

The article~\cite{GitmanHamkinsHolySchlichtWilliams:The-exact-strength-of-the-class-forcing-theorem} also makes clear exactly what it means to say in \GBC\ that a class forcing notion $\P$ admits a forcing relation for a first-order formula $\varphi$. For example, to admit a forcing relation for the atomic formulas $x\in y$, $x=y$, means to have a class relation $\forces$ for which the instances $p\forces \tau\in\sigma$ and $p\forces \tau=\sigma$ obey the desired recursive properties, much like having a truth predicate. And there are similar recursive requirements for a forcing relation for any given formula $p\forces\varphi(\tau)$. Furthermore, the difficulty of the existence of forcing relations is present entirely in the atomic case, for if a class forcing notion $\P$ admits a forcing relation for the atomic formulas, then it has forcing relations $p\forces\varphi(\vec\tau)$ for any specific first-order formula $\varphi$ (see~\cite[theorem~5]{GitmanHamkinsHolySchlichtWilliams:The-exact-strength-of-the-class-forcing-theorem}). When a class forcing notion admits a forcing relation, then it follows as a consequence that forced statements are true in the forcing extensions and true statements are forced, in the expected manner (see~\cite[section~3]{GitmanHamkinsHolySchlichtWilliams:The-exact-strength-of-the-class-forcing-theorem}). Meanwhile, the main result of~\cite{GitmanHamkinsHolySchlichtWilliams:The-exact-strength-of-the-class-forcing-theorem} shows that the assertion that every class forcing notion $\P$ admits a forcing relation for atomic formulas is equivalent to the principle $\ETR_{\Ord}$, and this in turn implies the existence of various kinds of truth predicates and even iterated truth predicates and furthermore, that every forcing notion $\P$ admits a uniform forcing relation that works with all formulas $\varphi$ simultaneously.

\section{The main theorem}

Let us now prove the main theorem. We shall prove a sequence of variations of it, beginning with an argument which achieves some greater transparency (and the stronger conclusion of a transitive model) by making use of an inaccessible cardinal, a mild extra assumption that will be weakened and ultimately omitted in the later variations.

\begin{theorem}\label{Theorem.Main-theorem-using-inaccessible}
 If there is an inaccessible cardinal, then there is transitive model of \ZFC\ which does not arise as a class-forcing extension of its \HOD\ by any class forcing notion with definable forcing relations in that \HOD.
\end{theorem}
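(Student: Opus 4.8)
The plan is to build, starting from an inaccessible cardinal $\kappa$, a model of \ZFC\ of the form $L[A]$ where $A \subseteq \kappa$ is a set of ordinals that is, on the one hand, generic over $L_\kappa$ (so that the resulting structure satisfies \ZFC), but on the other hand, chosen by a bootstrapping/diagonalization construction so that $A$ is not merely definable from ordinal parameters in the extension but actually \emph{codes} enough information about the \HOD\ of the final model to defeat every potential class forcing. The rough target is that $\HOD$ of the final model should be exactly $L_\kappa$ (or some similarly canonical small inner model), while $A$ itself, which generates $V$ over $\HOD$, should \emph{not} be obtainable as an element of any forcing extension $\HOD[G]$ for $G$ generic over a class forcing $\P$ definable in $\HOD$ with a definable forcing relation there. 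The key leverage is that if $V = \HOD[G]$ for such a $\P \in \HOD$ and $\HOD$-generic $G$, then every set of $V$, in particular $A$, lies in some $\HOD[G\restriction p]$-style set-sized piece, and moreover the whole extension is governed by a \emph{definable-in-\HOD} forcing relation; in particular $V$ would then possess, definably from $A$ together with the \HOD-definable data, a great deal of structural control that a suitably generic/diagonalized $A$ can be arranged to contradict.

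The key steps, in order, would be as follows. First, fix an inaccessible $\kappa$ and work with $L_\kappa \models \ZFC$, which (being inside $L$) equals its own \HOD; by the remarks in the excerpt, $L_{\kappa+1}$ or a suitable class structure over $L_\kappa$ carries a truth predicate, so we have enough second-order strength to run the construction. Second, set up a notion of forcing (a class forcing, or a set forcing of size $\kappa$ from the point of view of $L_\kappa$, hence ``class'' from the point of view of the model we build) together with a careful bookkeeping that enumerates all candidate pairs $(\P, \dot\forces)$ consisting of a definition of a class forcing notion and a definition of a forcing relation for it — there are only set-many such definitions, parametrized by \Godel\ codes and set parameters, and we enumerate them along $\kappa$. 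Third, carry out a $\kappa$-length iteration/fusion in which, at stage $\alpha$, we diagonalize against the $\alpha$-th candidate: we extend the generic object $A$ so that, if the $\alpha$-th candidate $(\P,\dot\forces)$ really did present $V$ as $\HOD[G]$, then the definability of $\dot\forces$ in $\HOD = L_\kappa$ together with the generic $A$ would let us define, inside $V$, some object that $A$ has been arranged to diagonalize out of — e.g.\ we make $A$ decode a ``trace'' that no $\P$-name evaluated by a generic $G$ could produce, exploiting that the forcing relation, being $\HOD$-definable, is too canonical to track the bespoke choices fed into $A$. Fourth, verify that the resulting $A \subseteq \kappa$ is still sufficiently generic that $L_\kappa[A] \models \ZFC$, that it is transitive (being an inner structure of $V_\kappa$ for the ambient universe, using the inaccessible), and that its \HOD\ is as intended — the latter requires a homogeneity or weak-homogeneity argument on the forcing used to add $A$, ensuring no extra ordinal-definable sets creep in beyond $L_\kappa$. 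Fifth, assemble these facts: if $L_\kappa[A]$ were a class-forcing extension of its \HOD\ by some $(\P,\dot\forces)$ definable in \HOD\ with definable forcing relation, that $(\P,\dot\forces)$ would appear at some stage $\alpha$ of the bookkeeping, contradicting the diagonalization performed there.

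I expect the main obstacle to be the third and fourth steps together: arranging the diagonalization so that it genuinely rules out \emph{every} definable class forcing with definable forcing relation, not just set forcing, while \emph{simultaneously} keeping $A$ generic enough for \ZFC\ and keeping $\HOD$ pinned down. The tension is real — making $A$ code structural information about $V$ pushes against $A$ being generic (and against $\HOD$ staying small), so the construction must thread the needle, presumably by coding only ``locally'' at a sparse set of coordinates and using a product or iteration with enough homogeneity on the remaining coordinates to protect both genericity and the \HOD\ computation. A secondary subtlety, flagged already in the excerpt's footnote, is handling $\omega$-nonstandard models and nonstandard formulas/forcing relations, but since this first version of the theorem assumes an inaccessible and aims only for a transitive (hence well-founded, standard) model, that difficulty is deferred to the later variations; here the enumeration of candidate definitions is genuinely only set-length and standard, which is exactly what makes the transparent argument possible.
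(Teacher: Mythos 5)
There is a genuine gap, and it sits exactly where you suspect: step three. Your plan is to enumerate the set-many candidate pairs $(\P,\dot\Vdash)$ and ``diagonalize'' against the $\alpha$-th candidate at stage $\alpha$, but you never say what concrete action defeats a candidate. The difficulty is that the adversary is not the definition of $\P$ but the generic filter $G\of\P$, which is quantified existentially \emph{after} your set $A$ is fixed; to kill one candidate you must rule out \emph{every} $\HOD$-generic $G$ producing your final model, and a local coding decision at stage $\alpha$ gives no visible handle on that. There is also a circularity: what you must diagonalize against is ``being a forcing extension of the \HOD\ \emph{of the final model},'' but that \HOD\ depends on all later stages of your iteration. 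The paper's section ``What the argument does not show'' makes the underlying point explicit: merely arranging that $A$ is definable in the extension but not in \HOD\ is \emph{not} enough to block a class-forcing presentation, so any argument of the shape ``$A$ is too bespoke for a canonical forcing relation to track'' needs a further idea that your proposal does not supply.

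The paper's proof needs no diagonalization at all. It codes a single distinguished object --- the truth predicate $\Tr$ for $\<L_\kappa,\in>$, available because $\kappa$ is inaccessible --- into the \GCH-style pattern of $L$-generic Cohen subsets of a sparse class of regular cardinals $\delta_\alpha$, via a weakly homogeneous Easton-support product (homogeneity pins $\HOD^{V_\kappa[G]}=L_\kappa$, matching your step four). The one property that defeats \emph{all} candidates simultaneously is that $\Tr$ is \emph{implicitly} definable: it is the unique class satisfying the Tarskian recursion, in the ground model and in every extension. Hence if some $\Q$ definable in $L_\kappa$ with definable forcing relation had a condition $q$ forcing that a formula $\psi$ defines an $L_\kappa$-truth predicate, all $r\leq q$ would have to force the same instances of $\psi$, so $\Tr(\varphi,\vec a)$ would be equivalent to $\exists r\leq q\ (r\Vdash_\Q\psi(\varphi,\check{\vec a}))$, making $\Tr$ definable in $L_\kappa$ and contradicting Tarski's theorem. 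If you want to repair your write-up, replace the bookkeeping iteration with this single coding of $\Tr$ and replace the stage-by-stage diagonalization with the uniqueness-plus-Tarski argument.
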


\begin{proof}
Suppose that $\kappa$ is inaccessible, and assume without loss that $V=L$, by moving if necessary to the constructible universe $L$. Consider the model $V_\kappa$, which is the same as $L_\kappa$ and which is a model of \ZFC. Since $\kappa$ is inaccessible, we may augment this model with all its subsets $\<V_\kappa,\in,V_{\kappa+1}>$, which will be a model of the second-order Kelley-Morse \KM\ set theory and therefore also a model of \Godel-Bernays \GBC\ set theory. Let $\Tr=\set{\<\varphi,\vec a>\mid V_\kappa\satisfies\varphi[\vec a]}$ be the satisfaction relation for $V_\kappa$, which is defined by the Tarski recursion in $V$; this is one of the classes in the \KM\ model. Let $T\of\kappa$ be a class of ordinals that codes the class $\Tr$ in some canonical manner, such as by including $\alpha$ if and only if the $\alpha^{th}$ element in the $L$-order is a pair $\<\varphi,\vec a>$ that is in $\Tr$. Note that every initial segment $T\intersect\gamma$ for $\gamma<\kappa$ is an element of $L_\kappa$, since it is the $\gamma$-initial segment of the truth predicate of some $L_\theta\elesub L_\kappa$, which is of course in $L_\kappa$. The set $T$ is a class in our model $\<V_\kappa,\in,V_{\kappa+1}>$, but in light of Tarski's theorem it is not first-order definable in the first-order part of this model $\<V_\kappa,\in>$.

Let $\delta_\alpha=\aleph_{\omega\cdot(\alpha+1)+1}$ for $\alpha<\kappa$; these will constitute a definable sequence of regular cardinals to be used as `coding points,' and sufficiently scattered to avoid any possible interference (this scattering is not strictly necessary, but it makes matters clear). Consider now the Easton-support product forcing $\P=\prod_{\alpha\in T}\Add(\delta_\alpha,1)$, which adds a Cohen subset to each $\delta_\alpha$ for $\alpha\in T$. This is set-forcing in $V$, but definable class forcing in our model $V_\kappa$; it is a very well understood forcing notion, and the standard Easton forcing factor arguments show that it preserves all cardinals and cofinalities of $V$ and that it preserves the inaccessibility of the cardinal $\kappa$. Because the forcing is pre-tame, the forcing relations $p\forces_\P\varphi(\vec \tau)$ for this forcing notion over $V_\kappa$ are definable using class parameter $T$ in $V_\kappa$. Suppose that $G\of\P$ is $V$-generic for this forcing, and in $V[G]$ we may consider the model $\<V_\kappa[G],\in>$, where we have now discarded all the classes and consider it merely as a \ZFC\ model.

We claim first that $\HOD^{V_\kappa[G]}=V_\kappa$. Since $V_\kappa=L_\kappa$, of course we have the inclusion $V_\kappa\of\HOD^{V_\kappa[G]}$. Conversely, notice that the forcing $\P$ is weakly homogeneous in $V$, which means that for any two conditions $p$ and $q$ there is an automorphism of the forcing $\pi$ for which $\pi(p)$ is compatible with $q$---one simply flips bits on the relevant coordinates in order to bring the conditions into compatibility. Since $p\forces\varphi(\check \alpha)$ if and only if $\pi(p)\forces\varphi(\check\alpha)$, it follows that any statement in the forcing language about check-names will be forced either by all conditions or none. Since every ordinal in the extension has a check name, it follows that every ordinal-definable set in $V_\kappa[G]$ is ordinal-definable in $V_\kappa$ from the forcing relation, which is definable from $T$. Since any set in \HOD\ is coded by such a set of ordinals in \HOD, it follows that every set in $\HOD^{V_\kappa[G]}$ is definable from set parameters in the structure $\<V_\kappa,\in,T>$ and in particular, it is an element of $V_\kappa$, since this structure satisfies the separation axiom. So $\HOD^{V_\kappa[G]}=V_\kappa=L_\kappa$.

Next, we notice that $T$ is first-order definable in the extension $V_\kappa[G]$, since it is precisely the collection of ordinals $\alpha$ for which there is in $V_\kappa[G]$ an $L$-generic Cohen subset of $\delta_\alpha$, using the fact that the map $\alpha\mapsto\delta_\alpha$ is definable in a way that is absolute between $V_\kappa$ and $V_\kappa[G]$. The forcing $G\of\P$ explicitly added such a generic set for each $\alpha\in T$ and an easy factor argument shows that when $\alpha\notin T$, there can be no such $L$-generic Cohen subset of $\delta_\alpha$ in $V_\kappa[G]$, since it cannot be added by the tail forcing, which is closed, and it cannot be added by the smaller factors, by the chain condition, since the forcing below $\delta_\alpha$ is too small on account of the scattering. So the class $T$ is first-order definable in $V[G]$. Consequently also, the $V_\kappa$-truth predicate $\Tr$ is definable as well, since this is definable from $T$.

Lastly, we claim that there can be no forcing notion $\Q\of V_\kappa$ having definable forcing relations in $V_\kappa$, such that $V_\kappa[G]$ arises as a forcing extension $V_\kappa[H]$ by some $V_\kappa$-generic filter $H\of\Q$. Suppose toward contradiction that there is. Note that $\Q$ itself, being the domain of its forcing relations, is also definable. Since $\Tr$ is definable in $V_\kappa[H]$ and fulfills the Tarskian recursion for being a truth predicate for $L_\kappa$, there must be some condition $q\in\Q$ forcing with respect to $\Q$ over $L_\kappa$ that a certain definition $\psi(x,\vec y)$ defines a truth predicate for $L_\kappa$. Since $\Tr$ is the only class that fulfills the property of being an $L_\kappa$-truth predicate---it is unique in all forcing extensions of $V_\kappa$ and indeed of $V$---it follows that all conditions $r\leq q$ must force exactly the same information about instances of $\psi(x,\vec y)$, since this information must accord with $\Tr$. Thus, $\Tr(\varphi,\vec a)$ holds just in case there is in $V_\kappa$ some $r\leq q$ with $r\forces_\Q\psi(\varphi,\check{\vec a})$. Since we assumed that the $\Q$ order and the instances of the forcing relation $\forces_\Q$ are definable classes in $V_\kappa$, it would follow that $\Tr$ is definable in $V_\kappa$, contrary to Tarski's theorem.

So the model $V_\kappa[G]$ in the forcing extension $V[G]$ is an instance of what is desired, a transitive model of \ZFC\ that does not arise as a class-forcing extension of its \HOD\ by any definable class forcing notion with a definable forcing relation, because it admits a definable truth predicate for its \HOD. It follows by a \Lowenheim-Skolem argument that there is also a countable transitive model instance of the phenomenon, and this is a $\Sigma^1_2$ assertion in $V[G]$, which must therefore also be true in the original model $V$. So there is such a transitive model of \ZFC\ in $V$ that is not a class-forcing extension of its \HOD, as desired.
\end{proof}

Let us now explain how to improve the argument by weakening the large cardinal hypothesis. The first thing to notice is that we don't really need the inaccessible cardinal, if we fall back merely on the assumption in \GBC\ that there is a first-order truth predicate. The observations in section~\ref{Section.Background} show that this assumption is strictly weaker in consistency strength (if consistent) than \ZFC\ plus an inaccessible cardinal, since it is provable in \KM\ and even in $\GBC+\ETR_\omega$, a slight strengthening of \GBC. With this weaker assumption, however, we no longer get a transitive model with the desired property, although if we had assumed a transitive model with a truth-predicate, a hypothesis that still remains weaker in consistency strength than an inaccessible cardinal, then the final model would also be transitive.

\begin{theorem}\label{Theorem.Main-theorem-from-truth-predicate}
 If there is a model of \GBC\ having a truth-predicate for first-order truth, then there is a model of \ZFC\ which is not a class-forcing extension of its \HOD\ by any class forcing notion with definable forcing relations in that \HOD.
\end{theorem}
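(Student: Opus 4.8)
The plan is to reprise the proof of Theorem~\ref{Theorem.Main-theorem-using-inaccessible}, with an arbitrary model of $\GBC$ that carries a truth predicate playing the role formerly played by the inner model $V_\kappa$ of the Kelley--Morse model built from an inaccessible. So suppose $\<M,\in,\mathcal C>$ is a model of $\GBC$ and $\Tr\in\mathcal C$ is a satisfaction class for the first-order structure $\<M,\in>$. The first move is a reduction: I would pass to the constructible universe of the model, replacing $M$ by $L^M$ and noting that the satisfaction class for $\<L^M,\in>$ is obtained from $\Tr$ by the internally definable relativization $\varphi\mapsto\varphi^L$, hence remains among the classes, and that $L^M$ with the induced classes still models $\GBC$. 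Thus we may assume $M\satisfies V=L$. Exactly as before, let $T\of\Ord^M$ be the canonical coding of $\Tr$ by a class of ordinals; since $M\satisfies V=L$ and $\GBC$ holds, $T$ is amenable to $M$, so $\<M,\in,T>$ satisfies separation and replacement in the expanded language, while by Tarski's theorem $\Tr$ is not first-order definable over $\<M,\in>$, even with parameters.

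Now I would force with $\P=\prod_{\alpha\in T}\Add(\delta_\alpha,1)$, the Easton-support product over $M$ using the same definable, scattered sequence of regular cardinals $\delta_\alpha$. This is pretame class forcing over $M$, definable there from $T$, and so has definable (from $T$) forcing relations over $M$. The one point genuinely absent from the inaccessible argument is the supply of a generic filter: there is no ambient universe to hand us one. This I would obtain by first applying the \Lowenheim-Skolem theorem to the two-sorted structure $\<M,\in,\mathcal C>$ with the predicate $\Tr$ distinguished, passing to a countable elementary substructure; by elementarity this submodel still models $\GBC$, still satisfies $V=L$, and the restriction of $\Tr$ still obeys the Tarskian recursion over the smaller first-order part, hence is a genuine satisfaction class there. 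Over a countable model there is a filter $G\of\P$ meeting every dense subclass of $\P$ belonging to the (countably many) classes of the model, and by pretameness $M[G]$ is a model of $\ZFC$.

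The remainder follows the earlier proof line for line. The forcing $\P$ is weakly homogeneous over $M$, so every ordinal-definable set of $M[G]$ is definable over $\<M,\in,\Tr>$ from ordinal parameters and hence lies in $M$; with $M=L^M\of\HOD^{M[G]}$ this yields $\HOD^{M[G]}=M$. Next, $T$, and hence $\Tr$, is first-order definable over $M[G]$, being the class of $\alpha$ for which there is in $M[G]$ an $L$-generic Cohen subset of $\delta_\alpha$---the relevant factor and chain-condition arguments, together with the scattering of the $\delta_\alpha$, go through unchanged. Finally, suppose toward a contradiction that $M[G]$ were a class-forcing extension $M[H]$ of $M=\HOD^{M[G]}$ by some forcing notion $\Q\in M$ (which is then also definable, being the domain of its forcing relations) with definable forcing relations in $M$. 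Since $\Tr$ is definable over $M[H]$ and, being the unique satisfaction class for $\<M,\in>$, remains so in every forcing extension of $M$, some condition $q\in\Q$ must force a fixed definition $\psi$ to define it; all conditions below $q$ must then force exactly the correct instances of $\psi$, so $\Tr$ would be definable over $M$ from $q$ and $\forces_\Q$, contradicting Tarski's theorem for $M$. Hence $M[G]$ is a model of $\ZFC$ that is not a class-forcing extension of its $\HOD$ by any definable class forcing notion with definable forcing relations there.

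The main obstacle to watch is precisely the manufacture of the generic filter: with the inaccessible gone we must descend to a countable model, and then verify that $\GBC$, the satisfaction class, its amenability, and $V=L$ all pass to the submodel---these are exactly the ingredients that power the $\HOD$ computation and the closing Tarski argument. Note that the resulting model need not be well-founded, since a model of $\GBC$ carrying a truth predicate need not be; if one starts instead from a transitive such model---still of lower consistency strength than an inaccessible---the construction returns a transitive model, recovering the conclusion of Theorem~\ref{Theorem.Main-theorem-using-inaccessible}.
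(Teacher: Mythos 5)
Your proposal is correct and follows essentially the same route as the paper: reduce to $V=L$, code the truth predicate $\Tr^L$ into an Easton-support product of Cohen forcing at scattered coding points, compute the $\HOD$ of the extension by weak homogeneity, and derive a contradiction with Tarski's theorem from the assumed definability of a forcing relation. You are more explicit than the paper about manufacturing the generic filter (descending to a countable elementary submodel), and slightly less explicit about why $\Tr^L$ is strongly amenable to $L$---the paper justifies this with a reflection argument showing that $\langle L,\in,\Tr^L\rangle$ together with its definable classes still satisfies \GBC---but both points are handled correctly.
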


\begin{proof}
Let's simply work inside the model, that is, in \GBC\ under the assumption that there is a truth predicate class $\Tr$, a class (necessarily not first-order definable) obeying the Tarski recursion. Using $\Tr$ we may construct a truth predicate $\Tr^L$ for $L$, and this truth predicate is strongly amenable to $L$, in the sense that if we move to $L$ and take the classes definable in the structure $\<L,\in,\Tr^L>$, we will retain \GBC. To see this, notice first that by the reflection theorem, there are ordinals $\theta$ for which $\<L_\theta,\in,\Tr^L\intersect L_\theta>$ is $\Sigma_1$-elementary in $\<L,\in,\Tr^L>$, and this implies that $\Tr^L\intersect L_\theta$ is the (unique) truth predicate for $\<L_\theta,\in>$, which is an element of $L$. Since by reflection sets defined in $\<L,\in,\Tr^L>$ are also definable in some initial segment $\<L_\theta,\in,\Tr^L\intersect L_\theta>$, it follows that they are all in $L$, and so $\<L,\in,\Tr^L>$ with its definable classes forms a model of \GBC. In short, we may assume $V=L$ without loss.

Next, as in theorem~\ref{Theorem.Main-theorem-using-inaccessible}, let $T\of\Ord$ be a class of ordinals that codes $\Tr^L$ in some canonical way, and let $\P=\prod_{\alpha\in T}\Add(\delta_\alpha,1)$ be the Easton-support product forcing that adds a Cohen subset to $\delta_\alpha=\aleph_{\omega(\alpha+1)+1}$ for all and only the $\alpha\in T$. If $G\of\P$ is $L$-generic, we may form the extension $L[G]$, which satisfies \ZFC. As before, the homogeneity argument will establish $\HOD^{V[G]}=V=L$. And the class $T$ will be definable in $L[G]$ as before, but there can be no class forcing notion definable in $L$ with definable forcing relations in $L$, because from such a forcing relation we would as before be able to define $T$ and therefore $\Tr^L$ in $L$, contrary to Tarski's theorem.
\end{proof}

Ultimately, this proof relies on the fact that $\Tr^L$ is not explicitly first-order definable in $L_\kappa$, although it is nevertheless implicitly first-order definable in $\<L_\kappa,\in>$, in the sense of~\cite{HamkinsLeahy2016:AlgebraicityAndImplicitDefinabilityInSetTheory} and~\cite{GroszekHamkins:The-implicitly-constructible-universe}, meaning that $\Tr^L$ is the unique class even in the extension satisfying a particular first-order expressible property in the language with a class predicate. Namely, $\Tr^L$ is the unique class satisfying the (first-order expressible) property of what it means to be a truth predicate, the Tarski recursion, for the ground model, and furthermore this uniqueness continues to hold in the extension.

Let us now finally prove the main theorem under the optimal hypothesis.

\begin{theorem}\label{Theorem.Main-theorem-optimal}
  If \ZFC\ is consistent, then there is a model of \ZFC\ that is not a class-forcing extension of its \HOD\ by any class forcing notion with definable forcing relations in that \HOD.
\end{theorem}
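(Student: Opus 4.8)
The plan is to eliminate the large-cardinal and class-theoretic hypotheses entirely, working only from $\Con(\ZFC)$, by relativizing the previous two arguments to a countable model obtained from the compactness and completeness theorems. First I would apply the completeness theorem to get a countable model $M\models\ZFC$; but a bare model of $\ZFC$ need not have a first-order truth predicate as a class, so the real work is to arrange one. The standard device is to use the reflection/compactness trick: by the reflection theorem, $\ZFC$ proves for each finite fragment $\Sigma$ the existence of arbitrarily large $V_\theta$ with $\langle V_\theta,\in\rangle$ satisfying $\Sigma$ and carrying its own truth predicate; more to the point, I would appeal to the fact that $\Con(\ZFC)$ implies $\Con(\ZFC + \text{``there is a transitive model of }\ZFC\text{''})$ is \emph{not} available for free, so instead I would pass through $\GBC$ with a truth predicate. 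Concretely: $\Con(\ZFC)$ implies $\Con(\GBC + \exists\,\Tr)$, because one can force over a model of $\ZFC$ to add classes, or more simply because $\GBC + \exists\,\Tr$ is finitely axiomatizable in a way whose each finite subtheory is provable in $\ZFC$ via partial satisfaction classes and reflection. Once that reduction is in hand, theorem~\ref{Theorem.Main-theorem-from-truth-predicate} applies verbatim inside such a model.

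More carefully, the key steps in order are as follows. First, show $\Con(\ZFC)\implies\Con(\ZFC + \exists M\ (M\models\GBC + \text{there is a first-order truth predicate}))$; I expect to do this by a compactness argument: add a constant for a truth predicate class and the schema asserting it obeys the Tarski recursion, and observe that any finite subtheory is satisfiable because $\ZFC$ proves the existence of $\langle V_\theta,\in,\Tr_\theta\rangle$ where $\Tr_\theta$ is a partial satisfaction class handling all the formulas mentioned in that finite fragment, which is a legitimate set and hence witnesses the fragment inside the $\GBC$-model $\langle V_\theta,\in,\mathcal P(V_\theta)\cap\text{(definable-from-}\Tr_\theta)\rangle$ — or more simply, inside $\langle V_{\theta+1}\rangle$ as classes over $V_\theta$. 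Second, fix such a model and simply run the proof of theorem~\ref{Theorem.Main-theorem-from-truth-predicate} inside it: pass to its $L$, which still has a strongly amenable truth predicate $\Tr^L$; code $\Tr^L$ by a class $T\subseteq\Ord$; force with the Easton-support product $\P=\prod_{\alpha\in T}\Add(\delta_\alpha,1)$ over $L$; in the extension $L[G]$, verify $\HOD^{L[G]}=L$ by weak homogeneity and verify $T$ (hence $\Tr^L$) is first-order definable in $L[G]$ using the coding-point characterization. Third, observe that $L[G]$ cannot be a class-forcing extension of $L=\HOD^{L[G]}$ by any class forcing $\Q$ with definable forcing relations in $L$, since as in the earlier proofs such a forcing relation would let us define $\Tr^L$ in $L$, contradicting Tarski. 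Finally, note that the assertion ``there is a model of $\ZFC$ not a class-forcing extension of its $\HOD$ by definable forcing with definable forcing relations'' is a first-order consequence of the theory we showed consistent, so $\Con(\ZFC)$ suffices for the conclusion.

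The main obstacle, and the only genuinely new content beyond theorem~\ref{Theorem.Main-theorem-from-truth-predicate}, is the first step: getting from $\Con(\ZFC)$ to the consistency of $\ZFC$ together with a model of $\GBC$ having a truth predicate. One must be careful that $\GBC + \exists\,\Tr$ is not finitely axiomatizable as a whole (the truth-predicate axioms form a schema, and $\GBC$ itself has the class comprehension schema), so a single compactness application over the language of $\GBC$ won't immediately work; the fix is to observe that the Tarski-recursion requirements for $\Tr$ reduce to finitely many sentences once $\Tr$ is a single class constant (clauses (a), (b), (c) in the background section are finite), and $\GBC$ has a finite axiomatization, so $\GBC + \exists\,\Tr$ \emph{is} in fact finitely axiomatizable — and then I need that $\ZFC$ proves the consistency of each such finite theory, which follows from reflection plus the existence of partial satisfaction classes for any fixed finite set of formulas. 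An alternative route avoiding even this is to cite that $\GBC + \ETR_\omega$, and hence $\GBC + \exists\,\Tr$, has consistency strength strictly below an inaccessible and in fact is a conservative-type extension situation over $\ZFC$ — but the self-contained compactness argument is cleanest and is what I would write out.
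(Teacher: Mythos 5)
Your overall strategy---reducing to theorem~\ref{Theorem.Main-theorem-from-truth-predicate} by manufacturing a suitable model from the bare consistency of \ZFC---is the right instinct, but the first step, on which everything else rests, is false. A model of \GBC\ carrying a \emph{full} first-order truth predicate $\Tr$ proves the consistency of \ZFC: since $\Tr$ is a class, \GBC\ comprehension and induction apply to formulas mentioning it, and one verifies that $\Tr$ declares every \ZFC\ axiom true (class replacement handles the replacement schema), that $\Tr$-truth is closed under deduction, and that $\Tr$ does not declare $0=1$ true. So $\GBC+\exists\Tr$ proves $\mathop{\rm Con}(\ZFC)$, and therefore the implication $\mathop{\rm Con}(\ZFC)\Rightarrow\mathop{\rm Con}(\GBC+\exists\Tr)$ cannot hold: it would give $\mathop{\rm Con}(\ZFC)\Rightarrow\mathop{\rm Con}(\ZFC+\mathop{\rm Con}(\ZFC))$, contradicting G\"odel's second incompleteness theorem applied to $\ZFC+\mathop{\rm Con}(\ZFC)$. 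Your compactness argument cannot repair this, and you have in fact identified the reason yourself: since the Tarski clauses for a single class constant and \GBC\ itself are finitely axiomatizable, the whole theory is one of its own finite subtheories, so ``every finite subtheory is consistent'' is just the desired conclusion restated, not a reduction of it. The proposed witnesses $\<V_\theta,\in,\Tr_\theta>$ only ever satisfy finitely many axioms of \ZFC, because \ZFC\ does not prove the existence of $\theta$ with $V_\theta\satisfies\ZFC$; they never witness the full finitely axiomatized theory. The fallback claim that $\GBC+\exists\Tr$ is a ``conservative-type extension'' of \ZFC\ is false for the same reason---it exceeds \ZFC\ in consistency strength, which is exactly why the paper needs a third theorem beyond theorem~\ref{Theorem.Main-theorem-from-truth-predicate}.

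The paper threads this needle by demanding much less: not a full truth predicate, but a \emph{partial} satisfaction class $\Tr^*$ defined on the $\Sigma_n$-assertions for some \emph{nonstandard} $n$ of a countable $\omega$-nonstandard model $\<M,\in^M>$, with $\ZFC$ holding in the language expanded by $\Tr^*$. Here compactness genuinely applies, because the assertions ``$\Tr^*$ is a partial truth class defined on the $\Sigma_k$-assertions,'' for $k=1,2,3,\dots$, form an infinite schema each of whose finite fragments is consistent relative to \ZFC\ alone---the $\Sigma_k$-truth predicate is definable in \ZFC\ for each standard $k$, so one may interpret $\Tr^*$ by that definition. Tarski's theorem still forbids $\Tr^*$ from being definable in $\<M,\in^M>$, since $\Tr^*$ must agree with genuine satisfaction on all standard-finite formulas, and that is all the coding-and-homogeneity argument needs: one runs theorem~\ref{Theorem.Main-theorem-from-truth-predicate} with $\Tr^*$ in place of $\Tr^L$, noting that all conditions of a putative class forcing notion over $\HOD$ must force the same values for the standard-finite instances of $\Tr^*$, so a definable forcing relation would make a partial satisfaction class definable in $\<M,\in^M>$. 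You should rework your first step along these lines; the remainder of your outline then goes through as you describe.
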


\begin{proof}
Assume that \ZFC\ is consistent. It follows that there is a countable $\omega$-nonstandard model of set theory $\<M,\in^M,\Tr^*>$ having a partial truth predicate $\Tr^*$ which is defined on the $\Sigma_n$-assertions of the model for some nonstandard $n$, obeying the Tarski recursion on its domain, and where the model satisfies \ZFC\ in the language with the predicate $\Tr^*$. There is such a model because the theory asserting $\ZFC$ in the expanded language plus the assertion that $\Tr^*$ is a partial truth class defined on the $\Sigma_1$ assertions, the $\Sigma_2$-assertions and so on, is finitely consistent and so it has a model. (But actually, a more general fact is that any countable computable saturated model $\<M,\in^M>$ can be expanded to include such a partial truth predicate, and furthermore this characterizes the computably saturated models amongst countable models.)

Even though $\Tr^*$ is only a partial truth predicate, nevertheless Tarski's theorem on the nondefinability of truth still goes through to show that $\Tr^*$ is not definable from parameters in $\<M,\in^M>$, since $\Tr^*$ must agree with the actual satisfaction relation on standard-finite formulas. (See~\cite{HamkinsYang:SatisfactionIsNotAbsolute} for examples of the bizarre features that nonstandard truth predicates can exhibit.)

We now simply undertake the argument of theorem~\ref{Theorem.Main-theorem-from-truth-predicate} using the predicate $\Tr^*$ instead of a full truth predicate. Namely, we may assume $V=L$ in $M$ without loss, and then let $T\of\Ord$ be a class of ordinals coding $\Tr^*$, and then force with $\P$ so as to code $T$ by the class of cardinals for which there is an $L$-generic Cohen subset. Again we will have $\HOD^{M[G]}=M$ by homogeneity, and so $T$ and hence $\Tr^*$ will be definable in the forcing extension $M[G]$. But the standard-finite instances of $\Tr^*$ are uniquely determined by actual truth in $\<M,\in^M>$, and so if the forcing relation is definable, we will be able to define a partial satisfaction class in $\<M,\in^M>$, contrary to Tarski's theorem. So $M[G]$ cannot be realized as a class-forcing extension of its \HOD\ by any class forcing notion definable in its \HOD\ with definable forcing relations there.
\end{proof}

\section{What the argument does not show}

A key idea of the main theorem is that we have a model of set theory and a class $T \subset \Ord$ that was first-order definable in our larger model, but was not first-order definable in \HOD.

We should like briefly to explain that those features alone are not sufficient to conclude that the universe is not a class-forcing extension of \HOD. For example, we might have started in $L$, and then forced to add a generic Cohen subclass $A\of\Ord$, using set-sized conditions, that is, using the partial order $2^{<\Ord}$. It is not difficult to see that $A$ is not definable in $L$. Next, we could force over the \GBC\ model $L[A]$ to code $A$ into the class of ordinals $\alpha$ for which there is an $L$-generic Cohen subset of $\delta_\alpha$, as before. In the resulting forcing extension $L[A][G]$, the class $A$ has become first-order definable. As a \ZFC\ model, it is the same as $\<L[G],\in>$, since the forcing to add $A$ added no sets. Since the coding forcing is weakly homogeneous, it follows as before that $\HOD^{L[G]}=L$. So this is a case where we had a nondefinable class in $L[A]$, which became definable in the forcing extension $L[G]$. But nevertheless, the model $L[G]$ is a class-forcing extension of $L$, since one can combine the two forcing notions into one class forcing notion, whose conditions specify an initial segment of $A$ and a condition in the corresponding forcing to begin coding that initial segment.

The conclusion is that one may not deduce that $V$ is not a class-forcing extension of \HOD\ merely by having a definable class of ordinals in $V$ that is not definable in \HOD. What we had used about the truth predicate $\Tr$ was not only that it was not definable, but that it would remain unique in all extensions of our original ground model. This is how we knew that all conditions in the forcing must agree on the particular details of the content of $\Tr$, and so from the definability of the forcing relation we get $\Tr$ definable in the ground model, contrary to Tarski's theorem. So what was used about $\Tr$ was that it was not only implicitly definable in the extension, but necessarily implicitly definable in any possible extension. 

We could have argued alternatively that if a partial truth predicate for the ground model becomes definable in a forcing extension whose forcing relations are definable, then since the truth values of $\Sigma_k$ formulas uniquely determines the truth values of $\Sigma_{k+1}$ formulas, it follows by induction on $k$ that all conditions must force the same truth values for all $\Sigma_k$ formulas for all $k$, since there can be no least counterexample to this. Thus, the truth predicate would be definable already in the ground model, contrary to Tarski's theorem.

\section{Friedman's theorem}\label{Section.Friedmans-theorem}

Let us now turn to Friedman's theorem~\cite{Friedman2012:TheStableCore}, which shows a sense in which one can attain a positive answer to the main question in the introduction.

\begin{theorem}[Friedman~\cite{Friedman2012:TheStableCore}]\label{Theorem.Friedmans-theorem}
There is a class $A$, definable in $V$ and amenable to \HOD\ in the sense that $\<\HOD,\in,A>$ satisfies \ZFC\ in the expanded language, such that $V$ is a class-forcing extension of $\<\HOD,\in,A>$ by a forcing notion $\P$ that is definable in $\<\HOD,\in,A>$ and which has a definable forcing relation there.
\end{theorem}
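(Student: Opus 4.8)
The plan is to realize $V$ as a class-forcing extension of $\HOD$ by the \emph{\Vopenka\ forcing for the whole universe}, presented as an $\Ord$-length iteration, using the predicate $A$ only to supply the one piece of data---a certain $\Ord$-sequence---that $\HOD$ cannot define on its own. The starting point is the uniform form of \Vopenka's theorem~\ref{Theorem.Vopenka-theorem-every-set-generic-over-HOD}: for each ordinal $\gamma$ there is a canonical complete Boolean algebra $\B_\gamma$, the algebra of \OD\ subsets of $\mathcal P(\gamma)$ under inclusion; identifying each such set with its index in a fixed \OD\ enumeration we may take $\B_\gamma$ to be a structure on a set of ordinals, hence an element of $\HOD$. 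The relevant feature is that every $a\of\gamma$ in $V$ generates an $\HOD$-generic filter $G_a$ on $\B_\gamma$---the set of indices of \OD\ sets to which $a$ belongs---from which $a$ is recovered as the set of $\xi<\gamma$ for which $G_a$ contains the index of $\set{x\of\gamma\mid\xi\in x}$; the genericity here rests only on the fact that complements of \OD\ sets are \OD, so that an \OD\ maximal antichain of \OD\ subsets of $\mathcal P(\gamma)$ must cover $\mathcal P(\gamma)$. Since every set is coded by a set of ordinals, stringing the $\B_\gamma$ together along $\Ord$, so that the generic encodes level by level the subsets of each $\gamma$ present in $V$, yields---by the usual \Vopenka\ argument carried out uniformly---a class forcing $\P$ and an $\HOD$-generic filter $G$, read off directly from $V$, with $\HOD[G]=V$; and $\HOD[G]\of V$ as well, since $\P$, the decoding of \OD\ indices, and hence $G$ are all definable in $V$.

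The obstruction---exactly what the main theorem of this paper shows cannot be circumvented---is that although each ordered algebra $\B_\gamma$ is an element of $\HOD$, its Boolean order refers essentially to the power sets of $V$, so the \emph{map} $\gamma\mapsto\B_\gamma$ is not definable over $\HOD$. Accordingly I would let $A$ be a class of ordinals coding the sequence $\<\B_\gamma\mid\gamma\in\Ord>$ of these \Vopenka\ algebras together with their orders. With $A$ available, $\P$ is definable over $\<\HOD,\in,A>$---a condition is a set-support sequence whose $\gamma$th coordinate lies in $\B_\gamma$---and $\P$ admits a definable forcing relation there: every initial segment $\P\restriction\gamma$ is a \emph{set} forcing, an element of $\<\HOD,\in,A>$ handed over by $A$, whose forcing relation $\<\HOD,\in,A>$ computes internally by the usual set-forcing recursion, and the atomic forcing relation of the full $\P$ is then obtained by locating each instance inside some $\P\restriction\gamma$; by~\cite[theorem~5]{GitmanHamkinsHolySchlichtWilliams:The-exact-strength-of-the-class-forcing-theorem} this yields a definable forcing relation for every first-order formula.

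It remains to check that $A$ has the advertised features. That $A$ is definable in $V$ is immediate, since $V$ defines $\HOD$ and $\OD$, hence each $\B_\gamma$ with its order, so the sequence $\<\B_\gamma\mid\gamma\in\Ord>$ is definable in $V$ by the class-recursion theorem. For amenability to $\HOD$, and for $\<\HOD,\in,A>$ to satisfy $\ZFC$, one observes that each truncation $A\restriction\gamma$, coding $\<\B_\delta\mid\delta<\gamma>$, is itself an \OD\ set of ordinals and hence an element of $\HOD$---indeed an element of $V_{\gamma'}^{\HOD}$ for some $\gamma'$ depending on $\gamma$---so that a standard reflection argument reduces the $A$-instances of separation and replacement to ordinary instances in $\HOD$. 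Since adjoining $A$ adds no sets, $\<\HOD,\in,A>$ has the same sets as $\HOD$, and assembling all of this we obtain $V=\<\HOD,\in,A>[G]$ as a bona-fide class-forcing extension by a forcing notion definable over $\<\HOD,\in,A>$ with a definable forcing relation there.

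I expect the main obstacle to be the genericity of $G$: one must show that the iterated \Vopenka\ forcing meets every dense class definable over $\<\HOD,\in,A>$, and relatedly that $\P$ is pre-tame, so that the forcing theorem applies. The natural route is a factoring argument---writing $\P$ as $(\P\restriction\gamma)\ast(\P\restriction[\gamma,\Ord))$ with the head a set forcing in $\HOD$ and the tail sufficiently distributive, which one arranges by scattering the sizes of the $\B_\gamma$ in the spirit of the coding points $\delta_\alpha$ of Theorem~\ref{Theorem.Main-theorem-using-inaccessible}---so that any given dense class reduces to a dense subset of the set forcing $\P\restriction\gamma$, where the single-stage genericity recalled above applies; the same factoring underwrites pre-tameness. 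Getting the supports and the spacing of the algebras to serve all of these purposes at once---genericity, $\HOD[G]=V$, preservation of $\ZFC$, and pre-tameness---is the delicate technical core of the argument.
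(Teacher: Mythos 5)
The paper itself gives no proof of this theorem---it is quoted from Friedman's \emph{The Stable Core}---so there is no internal argument to compare against; I will assess your proposal on its own terms. Your skeleton (uniformize \Vopenka's theorem into an $\Ord$-length forcing, with $A$ supplying the sequence $\gamma\mapsto\B_\gamma$ of \Vopenka\ algebras, which need not be definable over \HOD) is the natural first attempt, and you correctly locate the danger in the genericity of $G$ and the definability of $\forces_\P$. But the resolution you sketch does not work, and this is exactly where Friedman's actual proof does its real work. The factoring $\P=(\P\restriction\gamma)\times(\P\restriction[\gamma,\Ord))$ ``with the tail sufficiently distributive'' is not available: the \Vopenka\ algebra $\B_\delta$ of \OD\ subsets of $\mathcal P(\delta)$ has no closure or distributivity properties whatsoever (it is typically nowhere $\sigma$-closed and may collapse cardinals), and you cannot ``scatter the sizes of the $\B_\gamma$''---these algebras are determined by the \OD\ sets and are not parameters you get to choose, unlike the coding points $\delta_\alpha$ of theorem~\ref{Theorem.Main-theorem-using-inaccessible}. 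Without closed tails, neither the reduction of a dense class definable over $\<\HOD,\in,A>$ to a set-sized dense set (genericity), nor the reduction of the atomic forcing relation of the full class forcing $\P$ to the set forcings $\P\restriction\gamma$ (pre-tameness and definability of $\forces_\P$), goes through.

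There is a second gap even at the level of set-sized pieces: mutual genericity. \Vopenka's theorem gives that each individual filter $G_a$ is \HOD-generic for its own algebra $\B_\gamma$; it does not give that the product filter is generic for the product poset. A maximal antichain in $\B_{\gamma_1}\times\B_{\gamma_2}$ is a family of pairwise disjoint \OD\ \emph{rectangles}, and the covering argument you invoke (a maximal \OD\ antichain of \OD\ sets must cover $\mathcal P(\gamma)$) does not show that such a family of rectangles covers $\mathcal P(\gamma_1)\times\mathcal P(\gamma_2)$; coordinatewise genericity is strictly weaker than genericity for the product and requires its own argument. These two obstacles are why Friedman's class $A$ is not the sequence of \Vopenka\ algebras but the \emph{stability predicate}, recording $\Sigma_n$-elementarity relations among levels $H_\alpha$ of $V$, with the forcing supported on a sparse class of strongly stable ordinals; it is the elementarity coded into $A$ that lets one reflect a dense class definable over $\<\HOD,\in,A>$ down to a set-sized stage where a \Vopenka-style covering argument applies. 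So your proposal identifies the right starting point and the right obstacles, but the ``delicate technical core'' you defer is not a routine Easton-style factor argument: it forces a different choice of $A$ and a substantially different genericity argument.
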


In other words, there is a class filter $G\of\P$ meeting every dense class definable in $\<\HOD,\in,A>$ such that $V=\HOD[G]$, meaning that every set $a\in V$ is the value of a name $a=\val(\dot a,G)$ for some name $\dot a\in\HOD$.

This theorem can be seen as a positive answer to the main question, since it provides a way to amalgamate all the various set-forcing extensions $\HOD[G]$ provided by \Vopenka's theorem into one grand class forcing notion. The downside of the theorem is that the forcing notion $\P$ and its forcing relation $\forces_\P$ are not necessarily available as classes in \HOD, if one should consider \HOD\ purely as a \ZFC\ model. So this isn't the usual kind of case of class forcing that one imagines in \ZFC, where we have a ground model and a definable class forcing notion, to be used to make a forcing extension. Rather, for this case, we have the extension universe $V$ already and we can only define the forcing notion $\P$ in the extension $V$, and then observe that indeed it is possible to amend \HOD\ by this forcing notion in such a way so as to realize $V$ as the extension.

\section{The mantle}\label{Section.The-mantle}

It turns out that our main result also answers the following natural analogue of the main question in the area of set-theoretic geology, namely, the question of whether the set-theoretic universe $V$ must arise by class forcing over the mantle.

The subject of set-theoretic geology, introduced in~\cite{FuchsHamkinsReitz2015:Set-theoreticGeology}, is the study of the collection and structure of the ground models of the universe. A transitive inner model $W\satisfies\ZFC$ is a \emph{ground} of the set-theoretic universe $V$ if there is some forcing notion $\P\in W$ and $W$-generic filter $G\of\P$ for which $V=W[G]$ is the resulting forcing extension. The ground-model definability theorem, due to Laver~\cite{Laver2007:CertainVeryLargeCardinalsNotCreated} and Woodin~\cite{Woodin2004:RecentDevelopmentsOnCH} (Laver~\cite{Laver2007:CertainVeryLargeCardinalsNotCreated} uses a proof due to Hamkins), asserts that every such ground model $W$ is definable from parameters in $V$. The ground-model enumeration theorem~\cite[theorem~12]{FuchsHamkinsReitz2015:Set-theoreticGeology} asserts that there is a definable class $W\of V\times V$ whose sections $W_r=\set{x\mid (r,x)\in W}$ are all ground models of $V$ and every ground model of $V$ is such a $W_r$. In this way, quantifying over the ground models becomes a first-order quantifier, by quantifying over the indices $r$ that give rise to the various ground models $W_r$.

The \emph{mantle} $\Mantle$ is simply the intersection of all ground models $\Mantle=\Intersect_r W_r$, which is a definable class in light of the ground-model enumeration theorem. Results in~\cite{FuchsHamkinsReitz2015:Set-theoreticGeology}, when combined with Usuba~\cite{Usuba2017:The-downward-directed-grounds-hypothesis-and-very-large-cardinals}, show that the mantle is an inner model of \ZFC, and indeed, it is the largest forcing-invariant definable class.

Since the mantle arises through the process of stripping away the various forcing extensions that had given rise to the universe $V$, it is natural to inquire:

\begin{question}
 Is the set-theoretic universe $V$ necessarily a class-forcing extension of the mantle?
\end{question}

This is an analogue of the main question in the context of set-theoretic geology, and we intend again that the question is asking whether there must be a class forcing notion $\P$ definable in the mantle and with definable forcing relations there, such that $V$ arises as a forcing extension $V=\Mantle[G]$ for some $\Mantle$-generic $G\of\P$.

Once again, the answer is negative.

\begin{theorem}
 If \ZFC\ is consistent, then there is a model of \ZFC\ that does not arise as a class-forcing extension of its mantle $\Mantle$ by any class forcing notion with definable forcing relations in $\Mantle$.
\end{theorem}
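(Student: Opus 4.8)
The plan is to reuse the model $M[G]$ constructed in the proof of Theorem~\ref{Theorem.Main-theorem-optimal}, and to verify that its mantle is again $M$ (that is, $L^M$), so that the same Tarski-style obstruction applies. Thus the first step is to recall the setup: we have a countable $\omega$-nonstandard model $M\models\ZFC$ with $V=L$, a nonstandard partial truth predicate $\Tr^*$, a class $T\subseteq\Ord$ coding $\Tr^*$, and the Easton-support product forcing $\P=\prod_{\alpha\in T}\Add(\delta_\alpha,1)$ over $M$, with $G\subseteq\P$ generic. We already know from the earlier proof that $\HOD^{M[G]}=M$, that $T$ is first-order definable in $M[G]$ (as the class of $\alpha$ for which an $L$-generic Cohen subset of $\delta_\alpha$ exists), and hence that $\Tr^*$ is definable in $M[G]$.

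The key new step is to show $\Mantle^{M[G]}=M$. One direction is immediate: since $M[G]$ is a set-forcing extension of $M=L^M$, the model $M$ is itself a ground of $M[G]$, so $\Mantle^{M[G]}\subseteq M$. For the reverse inclusion $M\subseteq\Mantle^{M[G]}$, I would argue that every ground $W$ of $M[G]$ contains all of $M$. The cleanest route is to observe that $M=L^{M[G]}$ — the forcing $\P$ adds no new sets to $L$ — and that $L$ is contained in every ground model of any model of \ZFC, since $L$ is absolute and every transitive inner model of \ZFC\ contains $L$. Hence $M=L^{M[G]}\subseteq W$ for every ground $W$, giving $M\subseteq\Mantle^{M[G]}$. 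Combining, $\Mantle^{M[G]}=M=\HOD^{M[G]}$.

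Having identified the mantle with $M$, the rest is a verbatim repetition of the final paragraph of the proof of Theorem~\ref{Theorem.Main-theorem-optimal}: if $M[G]$ were a class-forcing extension of $\Mantle^{M[G]}=M$ by some forcing notion $\Q$ definable in $M$ with definable forcing relations there, then since the (definable in $M[G]$) partial truth predicate $\Tr^*$ is uniquely determined on standard-finite formulas by actual satisfaction in $\<M,\in^M>$, all conditions in $\Q$ below a suitable $q$ would force the same information about the defining formula for $\Tr^*$; the forcing relation, being definable in $M$, would then define a partial satisfaction class for $\<M,\in^M>$ inside $M$, contradicting Tarski's theorem. So $M[G]$ is not a class-forcing extension of its mantle by any class forcing notion definable in $\Mantle$ with definable forcing relations there.

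I expect the only real point requiring care is the computation $\Mantle^{M[G]}=M$, specifically making sure that $M=L^{M[G]}$ holds for this particular forcing (it does, since each $\Add(\delta_\alpha,1)$ and the Easton product add subsets of various $\delta_\alpha$ but no new constructible sets) and that one is entitled to use $L\subseteq W$ for every ground $W$. One should also note that the argument is robust: since $M$ is $\omega$-nonstandard we are not claiming a transitive witness, exactly as in Theorem~\ref{Theorem.Main-theorem-optimal}, and the statement of the theorem is accordingly about the existence of a model, not a transitive one. No appeal to Usuba's theorem or to the inner-model status of the mantle is needed, because we pin down the mantle exactly rather than merely bounding it.
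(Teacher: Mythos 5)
Your overall strategy is the paper's: identify the mantle of $M[G]$ with $M=L^M$ and then rerun the Tarski obstruction from Theorem~\ref{Theorem.Main-theorem-optimal}. The inclusion $M\subseteq\Mantle^{M[G]}$ (via $M=L^{M[G]}$ and the fact that $L$ is contained in every inner model) and the concluding Tarski argument are both fine. The genuine gap is in your justification of $\Mantle^{M[G]}\subseteq M$. You assert that ``$M[G]$ is a set-forcing extension of $M$,'' so that $M$ is itself a ground. This is false: the forcing $\P=\prod_{\alpha\in T}\Add(\delta_\alpha,1)$ is a \emph{proper class} forcing notion in $M$, since $T$ codes a proper-class (partial) truth predicate, and grounds in set-theoretic geology are by definition witnessed by forcing notions that are \emph{sets} in the ground. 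Indeed, $M$ is provably not a ground of $M[G]$ by any set forcing: for every cardinal $\lambda$ there is $\alpha\in T$ with $\delta_\alpha\geq\lambda$, and the generic Cohen subset of $\delta_\alpha$ has all of its ${<}\delta_\alpha$-approximations by sets of $M$ lying in $M$ (each such approximation is decided by a single condition) while not itself lying in $M$; so $M$ fails the $\lambda$-approximation property in $M[G]$ for every $\lambda$, which a set-forcing ground would have to satisfy by the Laver--Woodin--Hamkins ground-model definability analysis.

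The repair is exactly what the paper does. For each cardinal $\kappa$, let $G^{\kappa}$ be the restriction of $G$ to the coordinates with $\delta_\alpha\geq\kappa$. Then $M[G^{\kappa}]$ \emph{is} a genuine ground of $M[G]$, because the remaining head forcing (the finite-support-free Easton product over the set of coordinates below $\kappa$) is set-sized in $M[G^{\kappa}]$. Since the tail forcing at coordinates $\geq\kappa$ is ${<}\kappa$-closed, any set of ordinals belonging to every $M[G^{\kappa}]$ already belongs to $L^M=M$, so $\Mantle^{M[G]}\subseteq\bigcap_{\kappa}M[G^{\kappa}]=M$. With that substitution for your first inclusion, the rest of your argument goes through as written.
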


\begin{proof}
We may use the same arguments as in theorems~\ref{Theorem.Main-theorem-using-inaccessible},~\ref{Theorem.Main-theorem-from-truth-predicate} and~\ref{Theorem.Main-theorem-optimal}, because in those cases, the mantle of the resulting final model will be the same as its \HOD. The point is that since the forcing there was an Easton-support product forcing over $L$, one may strip off any particular factor or set of factors of the forcing and thereby achieve a ground. That is, the final model $L[G]$ is obtained by set-forcing over the tail-forcing ground $L[G^\kappa]$, where $G^\kappa$ performs the forcing only at factors $\kappa$ and above. The intersection of these tail-forcing grounds is simply $L$, and so the mantle of the model is $L$, which as we argued before, is the same as the \HOD\ of the model. Thus, the previous arguments show in those cases that the universe is not obtained by class-forcing over the mantle.
\end{proof}

\section{The intermediate model theorem}\label{Section.Intermediate-models}

Let us now explain how our main theorem provides some simple counterexamples to the class-forcing analogue of the intermediate-model theorem, the assertion that any \ZFC\ model intermediate between a ground and a forcing extension is also a forcing extension of that ground. In the case of set-forcing, the basic fact is familiar:

\begin{theorem}[{\cite[corollary~15.43]{Jech:SetTheory3rdEdition}}]\label{Theorem.Intermediate-model-theorem}
  If $M\of W\of M[G]$ are \ZFC\ models, where $M[G]$ is a forcing extension of $M$ by set-sized forcing in $M$ and $W$ is an inner model of $M[G]$, then $W$ is also a forcing extension of $M$ and a ground of $M[G]$.
\end{theorem}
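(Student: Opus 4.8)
The plan is to run the classical Boolean-algebra argument (cf.~\cite[corollary~15.43]{Jech:SetTheory3rdEdition}); let me indicate the main moves. First I would pass to the Boolean completion $\B=\mathrm{RO}(\P)^M\in M$, relabelled without loss so that its underlying set is a cardinal $\kappa$ of $M$. Then $M[G]$ is obtained from an $M$-generic ultrafilter on $\B$, again called $G$; every set of $M[G]$ has a $\B$-name in $M$; for $\sigma,\tau\in M$ the Boolean values $\|\sigma=\tau\|,\|\sigma\in\tau\|$ lie in $\B\of M$ and satisfy the truth lemma; and every set of ordinals $c$ of $M[G]$ has a \emph{nice name} $\dot c\in M$, so that $\|\check\xi\in\dot c\|$ is just the supremum of the antichain that $\dot c$ assigns to $\xi$. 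The key fact I would isolate is a \emph{reconstruction lemma}: if $\B_c\of\B$ is the complete subalgebra generated in $M$ by $\set{\|\check\xi\in\dot c\|\mid\xi\in\Ord}$, then whether a given $b\in\B_c$ belongs to $G$ is computable from $c$ alone, by induction on how $b$ is generated from those Boolean values using complementation and set-sized suprema --- the suprema handled by genericity of $G$ (if $\sum_i b_i\in G$, then, since the conditions below $\sum_i b_i$ lying below some $b_i$ are dense there, some $b_i\in G$). In particular, writing $g_c:=G\intersect\B_c$, we get $g_c\in M[c]$ while $c\in M[g_c]$ (as $\xi\in c\iff\|\check\xi\in\dot c\|\in g_c$), so $M[c]=M[g_c]$.

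The crux is then to show $W=M[a]$ for a single set of ordinals $a\in W$. For any set of ordinals $c\in W$, the reconstruction lemma gives $M[c]=M[g_c]$ with $g_c=G\intersect\B_c$ a subset of $\kappa$ and $g_c\in M[c]\of W$; so every set of ordinals of $W$ is $M$-equivalent to one that is a subset of the \emph{fixed} set $\kappa$. Since $W\satisfies\ZFC$, every element of $W$ is coded by a set of ordinals of $W$, so it follows that $W=M[\mathcal P(\kappa)^W]$; coding the set $\mathcal P(\kappa)^W\in W$ by a set of ordinals $a\in W$ using a well-order of $W$, we obtain $W=M[a]$. This is the step that genuinely exploits that the forcing is set-sized: it is the boundedness of all the $g_c$ inside $\kappa=|\B|$ that lets their information be amalgamated into one set.

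Given $W=M[a]$, fix a nice name $\dot a\in M$ for $a$ and let $\B_0:=\B_a\of\B$, a complete subalgebra with $\B_0\in M$. Then $a\in M[G\intersect\B_0]$, and by the reconstruction lemma $G\intersect\B_0=g_a\in M[a]$, so $W=M[a]=M[G\intersect\B_0]$. As $\B_0\in M$ and the restriction $G_0:=G\intersect\B_0$ of a generic ultrafilter to a complete subalgebra is $M$-generic on $\B_0$, this exhibits $W$ as a set-forcing extension of $M$. Finally, for $W$ a ground of $M[G]$: inside $W=M[G_0]$ form the quotient forcing of $\B$ over $\B_0$ determined by $G_0$; the standard factoring of a complete Boolean algebra over a complete subalgebra shows that $G$ induces a $W$-generic filter on it whose extension is exactly $M[G]$.

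The step I expect to be the genuine obstacle is the middle one --- reducing $W$ to $M[a]$ for a single $a$, together with the reconstruction lemma it rests on. This is exactly where the forcing being a \emph{set} is indispensable: for proper-class forcing the $g_c$ need not be uniformly bounded, and indeed the Boolean values themselves need not be definable, which is precisely why the class-forcing analogue fails --- the intermediate models extracted from the main theorem in section~\ref{Section.Intermediate-models} will be counterexamples.
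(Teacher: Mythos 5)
The paper offers no proof of this statement—it is quoted directly from Jech (corollary 15.43)—and your sketch is a correct rendition of precisely that standard argument: reduce $W$ to $M[a]$ for a single set of ordinals $a$, recover $G\intersect\B_a$ from $a$ by recursion over the complete subalgebra generated by the Boolean values of a nice name, and factor $\B$ over $\B_a$ for the quotient. The steps you flag as essential (uniform boundedness of the filters $g_c$ inside $|\B|$, definability of the Boolean values) are indeed exactly what fails for class forcing and what the paper's section~\ref{Section.Intermediate-models} exploits for its counterexamples.
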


Furthermore, if $G\of\B$ is $M$-generic over a complete Boolean algebra $\B\in M$, then there is a complete subalgebra $\B_0\of\B$ in $M$ such that $W=M[G_0]$ is the forcing extension via $G_0=G\intersect\B_0$, and consequently also $M[G]$ is a forcing extension of $W$ by the quotient forcing.

It is natural to inquire whether the property holds for class-forcing extensions.

\begin{question}
 Does the intermediate-model property hold for class forcing? In other words, if $V\of V[G]$ is a class-forcing extension of the universe $V$ and $W$ is a transitive \ZFC\ model with $V\of W\of V[G]$, then must $W$ be a class-forcing extension of $V$? Must $W$ be a ground of $V[G]$?
\end{question}

The answer to both of these questions is no, if one insists that the forcing notions be definable classes in $V$ with definable forcing relations there. Let us begin with the simplest kind of counterexample.

\begin{theorem}\label{Theorem.Counterexample-to-intermediate-model-property}
  If \ZFC\ is consistent, then there are models of \ZFC\ set theory $V\of W\of V[G]$, where $V[G]$ is a class-forcing extension of $V$ and $W$ is a transitive inner model of $V[G]$, but $W$ is not a forcing extension of $V$ by any class forcing notion with definable forcing relations in $V$.
\end{theorem}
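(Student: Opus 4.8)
The plan is to reuse the main theorem's construction directly. Starting from a model of the main theorem, I would set up a class-forcing extension $V \subseteq V[G]$ in which the intermediate model $W$ is precisely the kind of model exhibited in Theorem~\ref{Theorem.Main-theorem-optimal} — a model that is not a class-forcing extension of its own \HOD. The key observation is that in that construction, the intermediate model $W$ will have $V$ (which is $L$, or the $\HOD$ of $W$) sitting below it, and the full extension $V[G]$ above it, with $V = \HOD^W$. So if $W$ were a class-forcing extension of $V$ by a definable forcing notion with definable forcing relations in $V$, then since $V = \HOD^W$, this would directly contradict the conclusion that $W$ is not a class-forcing extension of its \HOD.

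Concretely, I would work inside a model $M$ as in Theorem~\ref{Theorem.Main-theorem-optimal}: a countable $\omega$-nonstandard model with a partial truth predicate $\Tr^*$ (or, if one only wants consistency-strength optimality, any of the three setups works). Assume $V = L$ inside $M$. Let $T \subseteq \Ord$ code $\Tr^*$. Now I would factor the coding forcing $\P = \prod_{\alpha \in T} \Add(\delta_\alpha,1)$ into two stages. The first stage is a class forcing $\P_0$ over $L$ that is genuinely definable in $L$ with definable forcing relations — for instance the full Easton product $\prod_{\alpha} \Add(\delta_\alpha, 1)$ taken over \emph{all} $\alpha$, or alternatively the forcing $2^{<\Ord}$ to add a Cohen subclass of $\Ord$ followed by coding, as in the ``what the argument does not show'' section. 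This produces $V[G] = L[G]$. The intermediate model $W$ is then the submodel obtained by using only the coordinates in $T$ — that is, $W = L[G \restriction T]$, which as a \ZFC\ model is exactly the model $M[G]$ of Theorem~\ref{Theorem.Main-theorem-optimal}. One checks that $L \subseteq W \subseteq L[G]$, that $W$ is a transitive inner model of $L[G]$, that $\HOD^W = L$ by the homogeneity argument already given, and that $T$ (hence $\Tr^*$) is first-order definable in $W$.

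The conclusion then follows immediately: if $W$ were a class-forcing extension of $V = L$ by some forcing notion $\Q$ definable in $L$ with definable forcing relations in $L$, then since $L = \HOD^W$, this would exhibit $W$ as a class-forcing extension of its own \HOD\ with all the definability requirements met, contradicting Theorem~\ref{Theorem.Main-theorem-optimal} (via the Tarski-theorem argument: from the definable $\Q$-forcing relation over $L$ one could define the partial truth predicate $\Tr^*$ in $L$). Hence $W$ is not a class-forcing extension of $V$ by any class forcing notion with definable forcing relations in $V$. A Löwenheim–Skolem / $\Sigma^1_2$-absoluteness step, exactly as in the proof of Theorem~\ref{Theorem.Main-theorem-using-inaccessible}, then transfers this to a countable-transitive-model instance if desired, but for the bare statement the countable $\omega$-nonstandard construction already suffices.

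The main obstacle I anticipate is arranging the \emph{first} stage $\P_0$ so that $V[G]$ is a bona fide class-forcing extension of $V$ by a forcing notion definable in $V$ with definable forcing relations there, while simultaneously having the intermediate model $W$ come out as the non-definably-coded model from the main theorem. The resolution is precisely the device already flagged in the ``what the argument does not show'' section: one takes $\P_0$ to be a class forcing over $L$ that first adds a generic subclass $A \subseteq \Ord$ via $2^{<\Ord}$ and then codes $A$ into the generic Cohen subsets of the $\delta_\alpha$; this combined forcing is a single definable class forcing over $L$ with definable (pretame) forcing relations, so $V[G]$ genuinely is a class-forcing extension of $V$, yet the intermediate model $W = L[A]$-with-the-coding (equivalently the submodel reading off only the relevant coordinates) carries a definable-in-$W$ copy of $\Tr^*$ with $\HOD^W = L$, which is all that is needed. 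A secondary point to verify carefully is that $W$ really is closed enough to be a \ZFC\ model and a genuine inner model of $V[G]$ — this is routine from the product structure of the forcing.
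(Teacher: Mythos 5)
Your primary construction is essentially the paper's own proof. The paper starts with a \GBC\ model $V$ carrying a partial satisfaction class $\Tr^*$, codes $T$ into the \GCH\ pattern to form $W=V[G_0]$, and then takes $V[G]$ to be the extension by the \emph{full} homogeneous Easton-support product $\prod_\gamma\Add(\gamma,\gamma^{++})$, which erases the coding and is definable in $V$ with definable forcing relations; the contradiction is that $T$, hence $\Tr^*$, is definable in $W$, so a definable forcing relation in $V$ for any presentation of $W$ as a class-forcing extension would (by internal induction on formulas, since all conditions must force the same truth values) make $\Tr^*$ definable in $V$, against Tarski. Your version---full product of $\Add(\delta_\alpha,1)$ over all $\alpha$ as the outer forcing, with $W$ the extension by the coordinates in $T$---is the same factorization with Cohen-subset coding in place of \GCH\ coding, and it works. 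Your detour through $\HOD^W=L$ is harmless but unnecessary; the direct Tarski argument (which you also mention parenthetically) is all that is needed, and avoids having to verify the homogeneity computation of $\HOD^W$.

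Your final paragraph, however, is a wrong turn that you should excise. There is no obstacle to resolve: the full product $\prod_\alpha\Add(\delta_\alpha,1)$ makes no reference to $T$, so it is already definable in $V$ with definable (tame) forcing relations, and the intermediate model obtained from the $T$-coordinates of the generic is exactly the model you want. The ``resolution'' you propose---first adding a generic class $A\of\Ord$ via $2^{<\Ord}$ and then coding $A$---is precisely the construction that the section ``What the argument does not show'' exhibits as a \emph{non}-counterexample: the class coded there is generic rather than the truth predicate, the intermediate model acquires a definable copy of $A$ rather than of $\Tr^*$, and that model \emph{is} a class-forcing extension of $L$, by the combined forcing whose conditions specify an initial segment of $A$ together with the corresponding coding conditions. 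Had you actually routed the proof through that device, the Tarski contradiction would evaporate and the theorem would not follow. Stick with your first option; as written, the first two paragraphs of your proposal already constitute a correct proof along the paper's own lines.
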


\begin{proof}
Let us start with any \GBC\ model $V$ having a partial satisfaction class $\Tr^*$, and by further forcing if necessary, let us assume that the \GCH\ holds. (This situation is equiconsistent with \ZFC, as we argued in theorem~\ref{Theorem.Main-theorem-optimal}.) Let $T\of\Ord$ be a class of ordinals coding $\Tr^*$ and let $W=V[G_0]$ be the forcing extension that forces with the Easton-support product forcing so as to code $T$ into the \GCH\ pattern on the regular cardinals. Let $V[G]$ be the forcing extension that performs the \GCH\ failure forcing at all the other regular cardinals as well, effectively erasing the coding.

Now, we throw away all the classes and consider $V\of W\of V[G]$ as \ZFC\ models. Note that $V[G]$ is a class-forcing extension of $V$ by the Easton-support product forcing $\prod_\gamma\Add(\gamma,\gamma^{++})$ forcing the failure of the \GCH\ at every regular cardinal $\gamma$. This forcing is definable and tame in $V$ and has definable forcing relations there. But $W$ cannot arise as such a forcing extension of $V$, since $T$ and hence $\Tr^*$ are definable in $W$, and from this it implies by (internal) induction on formulas that all conditions must force the same outcomes for that truth predicate, and so $\Tr^*$ would be definable in the ground model $V$, contrary to Tarski's theorem.
\end{proof}

Failures of the intermediate-model property for class forcing were already provided by S.~Friedman~\cite{Friedman1999:Strict-genericity}, where he used $0^\sharp$ to construct counterexamples. Our example here, however, appears to be considerably simpler and we have omitted the use of $0^\sharp$. In addition, our method works over models of arbitrary consistent first-order extensions of \GBC, because every consistent extension of \GBC\ has models with a partial satisfaction class $\Tr^*$, and this enables the construction of theorem~\ref{Theorem.Counterexample-to-intermediate-model-property}. In particular, there is no way to avoid the counterexamples by making large cardinal assumptions or other first-order assumptions about the nature of the ground model $V$. The forcing in our example is also comparatively mild, a progressively closed Easton-support product (and see theorem~\ref{Theorem.Counterexample-intermediate-model-property-CBA}, where we realize it via an $\Ord$-c.c.~complete Boolean algebra class). In theorem~\ref{Theorem.Intermediate-double-counterexample}, we provide a counterexample violating the class-forcing intermediate model property simultaneously in both directions.

One might hope to avoid the counterexamples by making stronger assumptions on the nature of the forcing giving rise to the extension $V[G]$. For example, since the standard proof of theorem~\ref{Theorem.Intermediate-model-theorem} in the set-forcing case makes fundamental use of the Boolean algebra for the forcing, rather than just the forcing notion as a partial order, one might hope that the intermediate-model property would hold for instances of class forcing where the forcing notion is a complete Boolean algebra.

\begin{question}
 If $V\of W\of V[G]$, where $G\of\B$ is $V$-generic for a definable class forcing notion $\B$, which is an $\Ord$-c.c.~complete class Boolean algebra in $V$ and $W$ is a $\ZFC$-inner model of $V[G]$, then must $W$ be a forcing extension of $V$? Must $W$ be a ground of $V[G]$?
\end{question}

The answer is negative. We are grateful to Victoria Gitman for discussions and observations concerning this question and for her suggestion to consider ``$\Ord$ is Mahlo.''

\begin{theorem}\label{Theorem.Counterexample-intermediate-model-property-CBA}
 If $\ZFC+``\Ord$ is Mahlo'' is consistent, then there are models of \ZFC\ set theory $V\of W\of V[G]$, where $G\of\B$ is $V$-generic by a tame definable $\Ord$-c.c.~complete Boolean algebra class $\B$, but $W$ is not a forcing extension of $V$ by any definable class forcing notion with definable forcing relations in $V$.
\end{theorem}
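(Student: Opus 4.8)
The plan is to run the construction of Theorem~\ref{Theorem.Counterexample-to-intermediate-model-property} over a ground model satisfying ``$\Ord$ is Mahlo,'' this being precisely the hypothesis that lets the full ``erasing'' class forcing, which is parameter-freely definable, be presented as a tame $\Ord$-c.c.~complete class Boolean algebra. As in Theorem~\ref{Theorem.Main-theorem-optimal}, I would first pass to a countable $\omega$-nonstandard model $\langle V,\in,\Tr^*\rangle$ satisfying \ZFC\ in the language with $\Tr^*$, together with the \GCH\ and ``$\Ord$ is Mahlo,'' where $\Tr^*$ is a partial satisfaction class defined on the $\Sigma_n$-formulas for some nonstandard $n$; by Tarski's theorem $\Tr^*$ is not definable from set parameters in $\langle V,\in\rangle$, as it agrees there with genuine satisfaction on standard formulas. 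One wrinkle is that we shall need ``$\Ord$ is Mahlo'' to hold as a schema even for classes defined using $\Tr^*$, so that it can be applied to a club defined from an arbitrary (class-sized) antichain of a definable class forcing notion. This can be arranged: for each standard $m$ a definition invoking $\Tr^*$ only on $\Sigma_m$-formulas is, over a \ZFC\ model, equivalent to a pure $\in$-definition (replacing $\Tr^*$ by the \ZFC-definable $\Sigma_m$-satisfaction relation), so any finite set of instances of the Mahlo schema in the expanded language reduces, over a \ZFC\ model, to instances of the ordinary one; hence the relevant theory is consistent relative to the consistency of \ZFC\ with ``$\Ord$ is Mahlo'' (one forces the \GCH\ first, by the usual tame reverse-Easton iteration, which preserves cardinals, cofinalities and inaccessibility, hence the Mahlo-ness of $\Ord$), and by adjoining a constant forced to be a nonstandard natural we get the countable model to be $\omega$-nonstandard with $\Tr^*$ defined on a nonstandard $\Sigma_n$ via overspill. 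We henceforth work in \GBC\ over $\langle V,\in,\Tr^*\rangle$; the \GCH\ holds, ``$\Ord$ is Mahlo'' holds for every \GBC\ class, and in particular, under the \GCH, every definable club of ordinals contains an inaccessible cardinal.

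The coding proceeds exactly as in Theorem~\ref{Theorem.Counterexample-to-intermediate-model-property}. Let $T\of\Ord$ canonically code $\Tr^*$, let $\gamma_\alpha$ be the $\alpha^{th}$ regular cardinal, let $\P=\prod_\gamma\Add(\gamma,\gamma^{++})$ be the Easton-support product over the regular cardinals --- a progressively closed, tame, parameter-freely definable class forcing notion with a definable forcing relation --- and let $\B$ be its Boolean completion, likewise a tame definable complete class Boolean algebra with a definable forcing relation. Since $\P\cong\P_T\times\P_{\neg T}$, where $\P_T$ forces only at the $\gamma_\alpha$ with $\alpha\in T$, a $V$-generic filter $G\of\B$ factors as $G_0\times G_1$; put $W=V[G_0]$, so that $V\of W\of V[G]$ are \ZFC\ models and $G$ is $V$-generic for $\B$. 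As before, $T$, and hence $\Tr^*$, is first-order definable in $W$, namely as the class of $\alpha$ with $2^{\gamma_\alpha}=\gamma_\alpha^{++}$, using that the forcing preserves all cardinals and cofinalities (so that $\alpha\mapsto\gamma_\alpha$ is computed the same way in $W$) and maintains the \GCH\ at the regular cardinals not used for coding. The key point is that the forcing $\P_T$ realizing $W$ over $V$ is definable only from the parameter $T$, so it is not a definable class forcing notion of $V$ regarded as a \ZFC\ model --- whereas $\P$ and $\B$ are.

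The crux, and where I expect the real work, is that $\B$ is $\Ord$-c.c.; it suffices that $\P$ is. Let $A\of\P$ be a maximal antichain. By the reflection theorem the class $C$ of cardinals $\lambda$ with $\langle V_\lambda,\in,A\cap V_\lambda\rangle\prec_{\Sigma_2}\langle V,\in,A\rangle$ is a club, and it is definable from the parameter $A$; so by the Mahlo-ness of $\Ord$, together with the \GCH\ (under which regular limit cardinals are inaccessible), $C$ contains an inaccessible cardinal $\lambda$. By Easton support every condition of $\P$ has support of size less than $\lambda$ below $\lambda$, hence bounded in $\lambda$ by regularity, and all its coordinate values already lie in $V_\lambda$ by inaccessibility, so $\P$ factors as $(\P\cap V_\lambda)\times\P_{\geq\lambda}$, the second factor being the product over the coordinates at and above $\lambda$. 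Since ``$A$ is a maximal antichain of $\P$'' is a $\Pi_2$ property of $A$ (predensity supplying the $\forall\exists$ clause), $\Sigma_2$-elementarity gives that $A\cap V_\lambda$ is a maximal antichain of $\P\cap V_\lambda$. But then $A\cap V_\lambda$ is predense in all of $\P$: writing any $q\in\P$ as the union of its restriction $q_0$ to the coordinates below $\lambda$ and its restriction $q_1$ to the rest, $q_0$ is compatible, inside $\P\cap V_\lambda$, with some $p\in A\cap V_\lambda$, and since $p$ has support bounded below $\lambda$ it is compatible with $q_1$ too, hence with $q$. Since $A$ is an antichain and $A\cap V_\lambda$ is predense in $\P$, every member of $A$ lies in $V_\lambda$; thus $A\of V_\lambda$, so $|A|\leq\lambda$, and $\P$, and with it $\B$, is $\Ord$-c.c. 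I expect the only real subtlety to be pinning down the precise degree of elementarity needed to reflect maximality of an antichain; note that a mere proper class of inaccessible cardinals would not suffice, since one must place an inaccessible inside the definable club $C$, which is exactly the force of ``$\Ord$ is Mahlo.'' (That $\B$ is tame and admits a definable forcing relation is routine for the Boolean completion of a progressively closed Easton product.)

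Finally, $W$ is not a class-forcing extension of $V$ by any class forcing notion $\Q$ definable in $V$ with a definable forcing relation there, by the argument of Theorems~\ref{Theorem.Main-theorem-optimal} and~\ref{Theorem.Counterexample-to-intermediate-model-property}: if $W=V[H]$ for such a $\Q$ and a $V$-generic $H\of\Q$, then $\Tr^*$ is definable in $W$, hence from $\Q$, $\forces_\Q$ and $H$; and since the $\Sigma_{k+1}$-instances of $\Tr^*$ are determined by its $\Sigma_k$-instances through the Tarski recursion, an induction on $k$ --- there being no least counterexample --- shows that all conditions of $\Q$ force the same partial satisfaction class, which would then be recoverable as a class definable in $\langle V,\in\rangle$ from the definable $\Q$ and $\forces_\Q$, contradicting Tarski's theorem. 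Thus $V\of W\of V[G]$ is the desired configuration: $V[G]$ is a class-forcing extension of $V$ by a tame definable $\Ord$-c.c.~complete class Boolean algebra $\B$, and $W$ is a transitive inner model of $V[G]$, but $W$ is not a class-forcing extension of $V$ by any definable class forcing notion with definable forcing relations in $V$.
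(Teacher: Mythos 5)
Your proof is correct and takes essentially the same route as the paper: run the construction of Theorem~\ref{Theorem.Counterexample-to-intermediate-model-property} over a model of ``$\Ord$ is Mahlo'' (plus \GCH), observe that the Easton-support product is then $\Ord$-c.c.\ so that its definable set-complete Boolean completion is in fact complete, and reuse the Tarskian non-definability argument for $W$. The only difference is one of detail: you spell out the reflection-to-an-inaccessible argument for the $\Ord$-chain condition and the point that the Mahlo schema must be available in the language with $\Tr^*$, both of which the paper dispatches by citing ``the class analogue of the usual proof'' and~\cite[theorem~14]{GitmanHamkinsHolySchlichtWilliams:The-exact-strength-of-the-class-forcing-theorem}.
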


\begin{proof}
The point is that if $\Ord$ is Mahlo, then the Easton-support product forcing used in the proof of theorem~\ref{Theorem.Counterexample-to-intermediate-model-property} is $\Ord$-c.c., by the class analogue of the usual proof that the Easton-support product of small forcing up to a Mahlo cardinal $\kappa$ is $\kappa$-c.c. Since the forcing is tame, it has a definable forcing relation, and therefore by~\cite[theorem~14]{GitmanHamkinsHolySchlichtWilliams:The-exact-strength-of-the-class-forcing-theorem} it has a definable set-complete Boolean completion. Since the forcing is $\Ord$-c.c., set-complete means fully complete. So when $\Ord$ is Mahlo, the class forcing $\B$ used in theorem~\ref{Theorem.Counterexample-to-intermediate-model-property} can be taken to be a complete Boolean algebra. But still the argument of that theorem shows that the intermediate model $W$ cannot be a class-forcing extension of $V$ by any class forcing notion with definable forcing relations in $V$.
\end{proof}

So far, the counterexamples provided by theorems~\ref{Theorem.Counterexample-to-intermediate-model-property} and~\ref{Theorem.Counterexample-intermediate-model-property-CBA} violate only one part of the intermediate-model property. Namely, in the set-forcing case, when $V\of W\of V[G]$ is an intermediate model for set-sized forcing, then the intermediate model $W$ is both a forcing extension of $V$ and a ground of $V[G]$. In our counterexamples above, we only showed that $W$ is not a forcing extension of $V$. And in those cases, $W$ was in fact a ground model of $V[G]$, since the rest of the forcing from $W$ to $V[G]$ was in fact definable in $W$.

Similar arguments, however, provide a dual kind of counterexample, with $V\of W\of V[G]$, but where $W$ is not a ground of $V[G]$ by any class forcing notion definable in $W$ and with definable forcing relations there. Namely, in order to achieve this, let  $V=L[G_T]$, where the class $T$ coding $\Tr^L$ is coded. Next, do the erasing forcing to $W=L[G]$, which arises as an ordinary Easton-support product of Cohen forcing. So the class $T$ will no longer be definable in $W$. Finally, form $V[G]=W[H_T]$ by coding $T$ again, on a disjoint noninterfering class of coding points. The final model $V[G]$ is a class-forcing extension of $V=L[G_T]$, since $T$ is definable in $V$. But it is not a class-forcing extension of $W$, since $T$ is not definable there.

In order to provide a full counterexample to the class-forcing intermediate model property $V\of W\of V[G]$, where simultaneously $W$ is not a class-forcing extension of $V$ and also not a ground of $V[G]$, we have to push the methods a bit harder.

\begin{theorem}\label{Theorem.Intermediate-double-counterexample}
 If \ZFC\ is consistent, then there are models of \ZFC\ set theory 
  $$V\of W\of V[G],$$
 where $V[G]$ is a class-forcing extension of $V$ and $W$ is an inner model of $V[G]$, transitive with respect to $V[G]$, but $W$ is neither a forcing extension of $V$ nor a ground of $V[G]$ by any class forcing notion with definable forcing relations in these respective ground models. If\ $\ZFC+\Ord$ is Mahlo is consistent, then one can find such instances where also $V\of V[G]$ is a generic extension by $G\of\B$, where $\B$ is an $\Ord$-c.c.~complete Boolean algebra class definable in $V$.
\end{theorem}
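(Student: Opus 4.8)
The plan is to run the cancellation idea in the proof of Theorem~\ref{Theorem.Counterexample-to-intermediate-model-property} twice, at interlocking stages, using two mutually independent rigid classes rather than one. Begin as in Theorem~\ref{Theorem.Main-theorem-optimal} with a countable $\omega$-nonstandard model $M$ of \ZFC, in which we assume the ground is $L$, and fix two partial satisfaction classes $X$ and $Y$ for $\<L,\in>$, each defined on the $\Sigma_n$-assertions for a common nonstandard $n$, chosen \emph{mutually independent} in the sense that neither is first-order definable over $\<L,\in>$ from the other. Such a pair is available because $M$ is countable: one may take $\<X,Y>$ mutually generic over $M$, say for the product of two copies of the usual forcing to adjoin a partial satisfaction class, after which the product-genericity lemma yields that $X$ is not definable over $\<L,\in,Y>$ and $Y$ is not definable over $\<L,\in,X>$. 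Each of $X$ and $Y$ is strongly amenable to $L$, and each obeys the ``no least counterexample'' rigidity used in the proof of Theorem~\ref{Theorem.Counterexample-to-intermediate-model-property}: if a partial satisfaction class for $\<L,\in>$ becomes definable in a class-forcing extension $N[H]$ of some $N\supseteq L$ by forcing definable in $N$ with definable forcing relations there, then by internal induction on the formula level it is already definable in $N$.

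Fix three pairwise disjoint, suitably scattered, definable $\Ord$-sequences of regular cardinals --- the $\delta$-points, the $\epsilon$-points, and the $\zeta$-points --- all of them cardinals at which \GCH\ holds in $L$, and let $T_X$ and $T_Y$ code $X$ and $Y$. Now build the tower. Let $V:=L[G_Y]$, where $G_Y$ codes $Y$ into the \GCH\ pattern on the $\epsilon$-points, forcing \GCH\ to fail at $\epsilon_\alpha$ exactly when $\alpha\notin T_Y$. Let $W:=V[G_0]$, where $G_0$ simultaneously codes $X$ into the \GCH\ pattern on the $\delta$-points and erases the $Y$-coding by forcing \GCH\ to fail at every $\epsilon$-point. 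And let $V[G]:=W[G_1]$, where $G_1$ simultaneously erases the $X$-coding by forcing \GCH\ to fail at every $\delta$-point and re-codes $Y$ into the \GCH\ pattern on the $\zeta$-points. All of these are weakly homogeneous Easton-support products of $\Add(\gamma,\gamma^{++})$-style forcings, so the $\HOD$-computations in the proof of Theorem~\ref{Theorem.Main-theorem-using-inaccessible}, adapted here, give that $\HOD^V$ and $\HOD^{V[G]}$ lie in the class of sets definable over $\<L,\in,T_Y>$ while $\HOD^W$ lies in those definable over $\<L,\in,T_X>$. Consequently $Y$ is definable in $V$ (from the $\epsilon$-pattern) but $X$ is not, by independence and homogeneity; $X$ is definable in $W$ (from the $\delta$-pattern) but $Y$ is not; and $Y$ is definable in $V[G]$ (from the $\zeta$-pattern) while $X$ is not definable in $V[G]$ at all. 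Also $V\of W\of V[G]$, with $W$ a transitive inner model of $V[G]$.

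The three requirements now follow. First, $W$ is not a forcing extension of $V$ by any class forcing notion with definable forcing relations in $V$: by the rigidity of $X$, any such forcing would make $X$ definable in $V$, which it is not. Dually, $W$ is not a ground of $V[G]$ by any class forcing notion with definable forcing relations in $W$, since any such forcing would make $Y$ definable in $W$, which it is not. Finally, $V[G]$ \emph{is} a class-forcing extension of $V$ by a forcing notion definable and tame in $V$: regrouping factors, the composite forcing from $V$ to $V[G]$ is the product of the forcing $\prod_\alpha\Add(\delta_\alpha,\delta_\alpha^{++})$ that fails \GCH\ at every $\delta$-point --- into which the coding and the erasing of $X$ combine parameter-freely, exactly as in Theorem~\ref{Theorem.Counterexample-to-intermediate-model-property} --- together with the forcing failing \GCH\ at $\epsilon_\alpha$ for $\alpha\in T_Y$ and the forcing failing \GCH\ at $\zeta_\alpha$ for $\alpha\notin T_Y$; since $T_Y$ is definable in $V$, this whole Easton-support product is definable there and is tame with a definable forcing relation. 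Thus the $X$-manipulations cancel to parameter-free forcing, while the $Y$-manipulations, though they mention $T_Y$, remain definable in $V$ because $T_Y$ does.

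For the final assertion, assume in addition that \ZFC\ together with ``$\Ord$ is Mahlo'' is consistent, and take $M$ --- hence $L$ --- to satisfy this scheme. Then, as in the proof of Theorem~\ref{Theorem.Counterexample-intermediate-model-property-CBA}, the composite forcing above is a tame Easton-support product of set-sized forcing below a Mahlo $\Ord$, hence $\Ord$-c.c., so by~\cite[theorem~14]{GitmanHamkinsHolySchlichtWilliams:The-exact-strength-of-the-class-forcing-theorem} it admits a definable set-complete --- hence fully complete --- Boolean completion $\B$ definable in $V$, and then $V\of V[G]$ is realized as a generic extension by some $G\of\B$. The main obstacle throughout is the interlocking bookkeeping: $X$ and $Y$ must be genuinely independent, so that coding either one never inadvertently renders the other definable --- this is what the $\HOD$-computations secure --- and the $\delta$-, $\epsilon$-, and $\zeta$-points must be laid out so that every parameter-dependent factor of the composite $V$-to-$V[G]$ forcing either cancels against a complementary parameter-free family or else refers only to the class $T_Y$, which is definable in $V$.
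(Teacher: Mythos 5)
Your architecture is essentially the paper's: two partial satisfaction classes, three disjoint families of coding points, and a code/erase/recode tower so that the first and third models see one predicate while the middle model sees the other. But there are two genuine gaps in how you set up and then cash in this architecture. First, the pair $X,Y$: there is no ``usual forcing to adjoin a partial satisfaction class'' to which a product-genericity lemma applies---partial satisfaction classes on nonstandard models are produced by compactness, resplendency, or Enayat--Visser-style constructions, not by forcing, so your claimed mutual non-definability of $X$ over $\<L,\in,Y>$ and vice versa is not established. The paper instead invokes Krajewski's theorem to obtain two partial satisfaction classes $\Tr_0,\Tr_1$ on the $\Sigma_n$-formulas that actually \emph{disagree} (each separately amenable, necessarily not jointly amenable), and that disagreement is what powers the contradiction.

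Second, and more seriously, your contradiction runs through the claims that $X$ is not definable in $V=L[G_Y]$ and $Y$ is not definable in $W$, where ``definable'' must allow arbitrary set parameters (that is the paper's standing convention, and the putative forcing relation in $V$ is allowed such parameters). The homogeneity/\HOD-style computation you cite controls definability from \emph{ordinal} parameters only; it does not rule out defining $X$ from a set parameter of the extension, such as a set-sized piece of the generic $G_Y$, and proving full non-definability in a forcing extension is exactly the delicate step your argument leaves open. The paper's proof is designed to avoid ever needing such a non-definability claim about an intermediate extension: the rigidity argument shows that if $W$ were a definable-forcing extension of $V$ then the predicate coded in $W$ becomes definable in $V$, where the \emph{other} predicate is already definable; and two partial satisfaction classes that are both definable classes of a single \ZFC\ model must agree on their common domain (there can be no least level of disagreement, by internal induction), contradicting Krajewski. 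Your construction is repairable along these lines---any two \emph{distinct} partial satisfaction classes on the same $\Sigma_n$-domain must disagree somewhere, so you could replace your non-definability step by this agreement argument---but as written the proof does not go through. The remaining components (the composite $V$-to-$V[G]$ forcing being definable in $V$ because the $X$-coding cancels against its erasure and the $Y$-factors are definable from $T_Y$, and the $\Ord$-is-Mahlo upgrade to an $\Ord$-c.c.\ complete Boolean algebra) match the paper and are fine.
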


\begin{proof}
If \ZFC\ is consistent, then so is \GBC\ and furthermore, we have mentioned that there is a model of \GBC\ with a partial satisfaction class, which is a truth predicate defined on the $\Sigma_n$ formulas for some nonstandard $n$ and obeying the Tarskian recursion up to that level. Note that any truth predicate defined up to $\Sigma_n$ can be extended uniquely to $\Sigma_{n+1}$ by unwrapping one additional level of the Tarski recursion.

Since truth predicates are not definable, it follows from classical results of Krajewski~\cite{Krajewski1974:MutuallyInconsistentSatisfactionClasses, Krajewski1976:Non-standardSatisfactionClasses} (see also~\cite{HamkinsYang:SatisfactionIsNotAbsolute}) that there is a model $M\satisfies\ZFC+V=L$ with two different partial satisfaction classes $\Tr_0$ and $\Tr_1$, defined on all $\Sigma_n$ formulas for some $n$, which disagree on truth at some level and which are both \ZFC-amenable to $M$. The model $M$ is necessarily $\omega$-nonstandard, since an induction on formulas in the meta-theory shows that any two truth predicates must agree on the truth of all standard formulas; so the formulas on which they disagree must be nonstandard. A similar argument shows that there can be no least $n$ where the disagreement between $\Tr_0$ and $\Tr_1$ arises, and so the classes $\Tr_0$ and $\Tr_1$ cannot be jointly \ZFC-amenable to $M$, since in $\<M,\in,\Tr_0,\Tr_1>$ we would thereby violate induction in the natural numbers. But considered separately, the structures $\<M,\in,\Tr_0>$ and $\<M,\in,\Tr_1>$ satisfy \ZFC\ in the expanded language and constitute \GBC\ models with their definable classes in the expanded language.

Let's now construct the desired counterexample. Fix three disjoint scattered non-interfering proper classes of coding points, $\gamma_\alpha$, $\delta_\alpha$, $\eta_\alpha$, which are all regular cardinals in $M$ and where the enumeration definitions are absolute. 

We begin with the structure $\<M,\in,\Tr_0>$. Using the first class of coding points $\gamma_\alpha$, let $M[G_0]$ be a forcing extension of $\<M,\in,\Tr_0>$ in which the class $\Tr_0$ becomes definable, by means of the Easton-support product of the forcing that adds a generic Cohen subset to $\gamma_\alpha$ whenever $\alpha$ is the code of an element of $\Tr_0$. 
After this, let us force to `erase' this coding, by adding Cohen subsets to all the remaining $\gamma_\alpha$, so that the combined forcing simply adds Cohen subsets to every $\gamma_\alpha$. So $M\of M[G_0]\of M[G]$, where $G$ is $M$-generic for the Easton-support product forcing that adds a subset to every $\gamma_\alpha$. We may assume that $G$ is not only $M$-generic but also $\<M,\in,\Tr_0>$-generic and $\<M,\in,\Tr_1>$-generic, if we simply construct $G$ first and then construct $G_0$ from it by restricting to the coordinates coding elements of $\Tr_0$.

Next, viewing $M[G]$ as a \GBC\ model having arisen as a forcing extension of $\<M,\in,\Tr_1>$, we force over $M[G]$ to $M[G][H_0]$, where $\Tr_1$ becomes coded by $H_0$ on the coding points $\delta_\alpha$. And then we perform the erasing forcing, adding Cohen sets to all the remaining $\delta_\alpha$, to form the extension $M[G][H]$. The forcing to add $H$ is the Easton-support product of the forcing to add a Cohen subset to every $\delta_\alpha$, and this forcing is definable in $M$ with a definable forcing relation there. We may assume that $G\times H$ is not only $M$-generic, but also $\<M,\in,\Tr_0>$ and $\<M,\in,\Tr_1>$-generic, by selecting $G$ and $H$ first and then restricting to $G_0$ and $H_0$.

Finally, viewing $M[G][H]$ as a GBC forcing extension of $\<M,\in,\Tr_0>$, we force to code $\Tr_0$ once again on the coding points $\eta_\alpha$ with filter $K_0$, forming the extension $M[G][H][K_0]$. 

The full chain of forcing extensions is
 $$M\of M[G_0]\of M[G] \of M[G][H_0]\of M[G][H]\of M[G][H][K_0],$$
and we view them now merely as \ZFC\ models, throwing all the classes away. Consider specifically the three models: 
 $$M[G_0]\quad\of\quad M[G][H_0]\quad\of\quad M[G][H][K_0].$$
Notice that since $\Tr_0$ is definable in $M[G_0]$, we may easily define in $M[G_0]$ the forcing to the final model $M[G][H][K_0]$, since this is just the erasing forcing on the $\gamma_\alpha$, the full forcing on the $\delta_\alpha$, and then the forcing to code $\Tr_0$ again on the $\eta_\alpha$. So we have a ground model $M[G_0]$ and a forcing extension $M[G][H][K_0]$ by definable class forcing in that ground model with a definable forcing relation there. 

But consider the intermediate model $M[G][H_0]$. The truth predicate $\Tr_1$ is definable there, being coded by $H_0$ on the cardinals $\delta_\alpha$. If $M[G][H_0]$ could be realized as a class-forcing extension of $M[G_0]$, by some class forcing notion with definable forcing relations, then there would be some condition $p$ forcing that the existence of $L$-generic Cohen sets on the cardinals $\delta_\alpha$ coded a partial satisfaction class for $M$ on the $\Sigma_n$ formulas, the same (nonstandard) $n$ as in $\Tr_1$. In $M[G_0]$, we may consider which truths were forced by which conditions in this forcing. By (internal) induction on formulas, it follows that all conditions below $p$ must force the same truth values for any particular formula in the truth predicate coded on the $\delta_\alpha$, since this is true for the atomic formulas and inductively if all conditions force the same outcome for the subformulas of a formula, then they also force the same outcome for the formula itself by the Tarski recursion. Therefore, $\Tr_1$ must be definable in $M[G_0]$, where $\Tr_0$ is also definable. But it is easy to see by induction on formulas that all partial truth predicates must agree on their common domain, contrary to our assumption via Krajewsky that $\Tr_0$ and $\Tr_1$ disagree. So $M[G][H_0]$ is not a class-forcing extension of $M[G_0]$ by any class forcing notion with definable forcing relations in $M[G_0]$.

With exactly similar reasoning, we claim that the final model $M[G][H][K_0]$ is not a class-forcing extension of the intermediate model $M[G][H_0]$, because $\Tr_1$ is definable in the intermediate model and $\Tr_0$ is definable in the final model. So the intermediate model $M[G][H_0]$ is neither a forcing extension of $M[G_0]$ nor a ground of $M[G][H][K_0]$, even though it is between these two models, which form a class-forcing extension. 

If we had $\Ord$ is Mahlo in $M$, then just as in theorem~\ref{Theorem.Counterexample-intermediate-model-property-CBA} the forcing to go from $M[G_0]$ to $M[G][H][K_0]$ would be $\Ord$-c.c., and therefore could be realized as a class-forcing extension by a complete Boolean algebra class.
\end{proof}

If we had assumed that \GBC\ was relatively consistent with the existence of a truth predicate, a mild extra assumption that is weaker than $\GBC+\ETR_\omega$, which lies strictly below Kelley-Morse set theory, then we could have avoided the fuss in the argument about partial truth predicates and worked instead with distinct full truth predicates. 

We do not yet know how to arrange counterexamples of the kind mentioned in theorem~\ref{Theorem.Intermediate-double-counterexample} with transitive models. 

\printbibliography

\end{document}